\documentclass[reqno,a4paper,12pt]{amsart} 

\usepackage{amsmath,amscd,amsfonts,amssymb}
\usepackage{mathrsfs,dsfont}

\numberwithin{equation}{section}
\numberwithin{figure}{section}

\addtolength{\topmargin}{-0.5cm}
\addtolength{\textheight}{1cm}
\addtolength{\hoffset}{-1.5cm}
\addtolength{\textwidth}{3cm}

\parskip .06in

\newcommand\R{\mathbb{R}}

\newcommand\Z{\mathbb{Z}}
\newcommand\F{\mathcal{F}}

\newcommand\T{\mathbb{T}}
\newcommand\gam{\gamma}

\newcommand\lam{\lambda}
\newcommand\Lam{\Lambda}

\newcommand\sig{\sigma}
\newcommand\Om{\Omega}

\newcommand\1{\mathds{1}}
\newcommand\eps{\varepsilon}

\renewcommand\le{\leqslant}

\renewcommand\leq{\leqslant}
\renewcommand\geq{\geqslant}
\newcommand\sbt{\subset}

\renewcommand\hat{\widehat}
\renewcommand\Re{\operatorname{Re}}

\newcommand{\ft}[1]{\widehat{#1}}
\newcommand{\dotprod}[2]{\langle #1 , #2 \rangle}

\newcommand{\supp}{\operatorname{supp}}

\newcommand{\zeros}{Z}

\newcommand{\zfft}{Z(\ft{f}\,)}

\newcommand{\half}{\tfrac{1}{2}}

\theoremstyle{plain}
\newtheorem{thm}{Theorem}[section]
\newtheorem{lem}[thm]{Lemma}
\newtheorem{corollary}[thm]{Corollary}

\newtheorem*{claim*}{Claim}

\newcommand{\thmref}[1]{Theorem~\ref{#1}}
\newcommand{\secref}[1]{Section~\ref{#1}}

\newcommand{\lemref}[1]{Lemma~\ref{#1}}

\newcommand{\corref}[1]{Corollary~\ref{#1}}

\theoremstyle{definition}

\newtheorem*{definition*}{Definition}
\newtheorem*{remarks*}{Remarks}
\newtheorem*{remark*}{Remark}

\newenvironment{enumerate-roman}
{\begin{enumerate}
\addtolength{\itemsep}{5pt}
}
{\end{enumerate}}

\newenvironment{enumerate-alph}
{\begin{enumerate}
\addtolength{\itemsep}{5pt}
}
{\end{enumerate}}

\newenvironment{enumerate-num}
{\begin{enumerate}
\addtolength{\itemsep}{5pt}
}
{\end{enumerate}}

\newenvironment{enumerate-text}
{\begin{enumerate}
\addtolength{\itemsep}{5pt}
}
{\end{enumerate}}

\begin{document}

\title[An example concerning Fourier analytic criteria for tiling]
{An example concerning Fourier analytic criteria for translational tiling}

\author{Nir Lev}
\address{Department of Mathematics, Bar-Ilan University, Ramat-Gan 5290002, Israel}
\email{levnir@math.biu.ac.il}

\date{September 27, 2021}
\subjclass[2010]{42A38, 43A45, 52C23}
\keywords{Tiling, translates, Fourier transform, distributions, spectral synthesis}
\thanks{Research supported by ISF Grant No.\ 227/17 and ERC Starting Grant No.\ 713927.}

\begin{abstract}
It is well-known that the functions $f \in L^1(\mathbb{R}^d)$ whose translates along a lattice $\Lambda$ form a tiling, can be completely characterized in terms of the zero set of their Fourier transform. We construct an example of a discrete set $\Lambda \subset \mathbb{R}$ (a small perturbation of the integers) for which no characterization of this kind is possible: there are two functions $f, g \in L^1(\mathbb{R})$ whose Fourier transforms have the same set of zeros, but such that $f + \Lambda$ is a tiling while $g + \Lambda$ is not. 
\end{abstract}

\maketitle

% =======================================

\section{Introduction}

\subsection{} 
Let $f$ be a function in $L^1(\R)$ and let
$\Lam \sbt \R$ be a discrete set. 
We say that \emph{$f$ tiles $\R$
at level $w$} with the translation set $\Lam$,
or that 
\emph{$f+\Lam$ is a tiling of $\R$
at level $w$} (where $w$ is a constant), if 
\begin{equation}
\label{eqI1.1}
\sum_{\lambda\in\Lambda}f(x-\lambda)=w\quad\text{a.e.}
\end{equation}
and the series in \eqref{eqI1.1} converges absolutely a.e.

In the same way one can define tiling of $\R^d$
by translates of a function $f \in L^1(\R^d)$.

For example, if $f = \1_\Omega$ 
is the indicator function of a set $\Omega$,
and $f + \Lam$ is a tiling at level $1$, then this means
that the translated copies $\Omega+\lam$, $\lam\in\Lam$,
fill the whole space without overlaps up to measure zero. 
To the contrary, for tiling by a
 general real or complex-valued function
$f$, the translated copies may have 
overlapping supports.

Tilings by translates of a function 
have been studied by several authors, see, in particular, \cite{LM91}, 
\cite{KL96}, \cite{Kol04}, \cite{KL16}, \cite{Liu18}, \cite{KL21}.

\subsection{}
It is well-known that in the study of translational
tilings, the set 
\begin{equation}
\label{eqI5.1}
 \zfft := \{ t  : \ft{f}(t) = 0\}
\end{equation}
of the zeros of the Fourier transform
\begin{equation}
\label{eqI5.2}
\ft{f}(t)=\int f(x) \exp(-2 \pi i tx) dx
\end{equation}
plays an important role.  For example, 
let $\Lam$ be  a lattice in $\R^d$,
then $f + \Lam$ is a tiling 
if and only if the set $\zfft$
contains $\Lam^* \setminus \{0\}$, where $\Lam^*$ is
the dual lattice. This means that the functions $f$ that tile
by a lattice $\Lam$ can be  completely characterized
in terms of the zero set $\zfft$.
(One can show that  the tiling 
level is given by $w = \ft{f}(0) \det(\Lam)^{-1}$.)

The  necessity of the condition for tiling 
in the last example can be generalized as follows.
For a discrete set $\Lambda \sbt \mathbb{R}$
 we consider the measure
\begin{equation}
\label{eqI5.4}
\delta_\Lambda:=\sum_{\lambda\in\Lambda}\delta_\lambda.
\end{equation}
We will assume that $\Lambda$ has \emph{bounded density}, which means that
 \begin{equation}
\label{eqI5.5}
 \sup_{x\in\mathbb R} \#(\Lambda\cap[x,x+1))<+\infty.
 \end{equation}
In particular \eqref{eqI5.5} implies that the measure $\delta_\Lambda$ is a temperate distribution on $\mathbb R$, so it has
a well-defined Fourier transform 
 $\hat\delta_\Lambda$
  in the distributional sense.

\begin{thm}[{\cite{KL16}}]
\label{thmI3.1}
Let $f\in L^1(\mathbb R)$, and $\Lambda\sbt\R$
 be a discrete set of bounded density. If $f+\Lambda$ is a tiling
at some level $w$, then
\begin{equation}
\label{eqI3.1}
\supp(\hat\delta_\Lambda) \setminus \{0\} \subset \zfft.
\end{equation}
\end{thm}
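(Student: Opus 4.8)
The plan is to read the tiling equation \eqref{eqI1.1} as a convolution identity and then pass to the Fourier side. The absolutely convergent sum $\sum_{\lambda\in\Lambda}f(x-\lambda)$ is exactly the convolution $f*\delta_\Lambda$, so the hypothesis that $f+\Lambda$ tiles at level $w$ says precisely that
\[
f*\delta_\Lambda=w
\]
as an identity of temperate distributions, the right-hand side being the constant function $w$. Here one uses the bounded density \eqref{eqI5.5} to see that $f*\delta_\Lambda$ is locally integrable with integral over unit intervals bounded uniformly by $\const\cdot\|f\|_{L^1}$, hence temperate. Taking Fourier transforms and using $\ft{\1}=\delta_0$, this becomes
\[
\ft{f}\cdot\hat\delta_\Lambda=w\,\delta_0 .
\]
Since the right-hand side is supported at the origin, this identity forces $\ft f\cdot\hat\delta_\Lambda$ to vanish on $\R\setminus\{0\}$, and because $\ft f$ is continuous this should pin $\hat\delta_\Lambda$ into $\{0\}\cup\zfft$, which is exactly \eqref{eqI3.1}.

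The main obstacle is to give rigorous meaning to the product $\ft f\cdot\hat\delta_\Lambda$ and, crucially, to show that it behaves \emph{locally}. The difficulty is that $f\in L^1(\R)$ yields only a continuous $\ft f$ (Riemann--Lebesgue), not a smooth one, so one cannot multiply the distribution $\hat\delta_\Lambda$ by $\ft f$ the way one multiplies by a test function. I would resolve this by invoking that $\ft f$ lies in the Fourier algebra $A(\R)=\F L^1(\R)$, and that $\hat\delta_\Lambda$ --- the Fourier transform of the translation-bounded measure $\delta_\Lambda$ (translation-boundedness being exactly \eqref{eqI5.5}) --- is a module element over $A(\R)$: for $f\in L^1$ the product $\ft f\cdot\hat\delta_\Lambda$ is well defined, equals $\widehat{f*\delta_\Lambda}$, and the action is associative and local. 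Granting this, the displayed identity holds in the required sense.

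To extract the support statement I would localize. Fix $t_0\neq0$ with $\ft f(t_0)\neq0$; by continuity there is a neighborhood $I$ of $t_0$ with $0\notin I$ on which $|\ft f|$ is bounded below. By local invertibility in the Fourier algebra (Wiener's theorem on $1/\ft f$) choose $v\in A(\R)$ with $v\,\ft f\equiv1$ on a smaller neighborhood of $t_0$. Multiplying the identity by $v$ and using associativity yields $\hat\delta_\Lambda=v\cdot(\ft f\cdot\hat\delta_\Lambda)=w\,v(0)\,\delta_0$ near $t_0$; since $t_0\neq0$ the right-hand side vanishes there, so $\hat\delta_\Lambda=0$ on a neighborhood of $t_0$. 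Thus no such $t_0$ lies in $\supp(\hat\delta_\Lambda)$, i.e.\ every point of $\supp(\hat\delta_\Lambda)\setminus\{0\}$ is a zero of $\ft f$, which is \eqref{eqI3.1}.

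As a more self-contained alternative avoiding the Fourier-algebra language, one can argue directly with test functions: for $\varphi\in C_c^\infty$ supported in $I$ one has $\langle\hat\delta_\Lambda,\varphi\rangle=\sum_{\lambda\in\Lambda}\ft\varphi(\lambda)$, absolutely convergent by bounded density, and by approximating $1/\ft f$ on $I$ by transforms of $L^1$ functions and feeding them through the tiling identity $f*\delta_\Lambda=w$, one shows this sum equals $w\,\varphi(0)=0$. Either way, the crux is handling the mere continuity of $\ft f$; the convolution-to-multiplication passage and the support bookkeeping are routine once the product $\ft f\cdot\hat\delta_\Lambda$ is under control.
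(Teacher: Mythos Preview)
The paper does not give its own proof of this theorem: \thmref{thmI3.1} is quoted from \cite{KL16} as background and is not re-proved here, so there is no in-paper argument to compare against.

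That said, your sketch is the standard route and is essentially the argument in \cite{KL16}. You correctly identify the only real issue, namely giving sense to $\ft f\cdot\hat\delta_\Lambda$ and showing this product behaves locally. Two small points worth tightening: first, $\hat\delta_\Lambda$ is a priori only a temperate distribution, not a pseudomeasure, so the $A(\R)$--module action must be \emph{defined} via $\phi\cdot\hat\delta_\Lambda:=\widehat{\check\phi*\delta_\Lambda}$ and then checked to agree with the ordinary distributional product when $\phi$ is smooth and compactly supported (this is what makes ``locality'' meaningful); second, the associativity step $(v\ft f)\cdot\hat\delta_\Lambda=v\cdot(\ft f\cdot\hat\delta_\Lambda)$ unwinds to $(g*f)*\delta_\Lambda=g*(f*\delta_\Lambda)$ for $f,g\in L^1$, which holds by a Fubini argument using the bounded density of $\Lambda$ (exactly the kind of estimate in \eqref{eqP1.13} and the surrounding discussion). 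Once those two verifications are in place, your Wiener-inversion argument goes through cleanly. Your ``self-contained alternative'' is in the same spirit as the approximation argument the paper uses in \lemref{lemR14.3} for a related computation.
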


A similar result is true also in $\R^d$.
In the earlier works \cite{KL96}, \cite{Kol00a}, \cite{Kol00b}
this result was proved under various extra assumptions.

If $\Lam$ is a lattice, then 
 $\hat\delta_\Lambda = \det(\Lam)^{-1} \cdot \delta_{\Lam^*}$
by the Poisson summation formula. 
This  implies that
$\supp(\hat\delta_\Lambda) = \Lambda^*$.
Hence in this case
the condition \eqref{eqI3.1} is not
only necessary, but also sufficient,
for $f + \Lam$ to be a tiling at some level $w$.

However for a general 
discrete set $\Lambda$ of bounded density,
the sufficiency of the condition \eqref{eqI3.1} for
tiling has remained an open problem.
In this paper, we settle this problem
in the negative. Our main result is the following:

\begin{thm}
\label{thmI8.1}
There is a discrete set $\Lambda \subset \R$  of bounded density (a small perturbation of the integers) with the following property:
given any real scalar $w$ there are two real-valued functions 
$f, g \in L^1(\mathbb{R})$
whose Fourier transforms have the same set of zeros,
but such that $f + \Lambda$ is a tiling at level $w$ 
while $g + \Lambda$ is not a tiling at any level.
\end{thm}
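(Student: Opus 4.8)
The plan is to pass to the Fourier side. Since $f \in L^1(\R)$, the tiling relation $\sum_{\lambda\in\Lambda} f(x-\lambda) = w$ says that $f * \delta_\Lambda$ equals the constant $w$, and after taking Fourier transforms this becomes the distributional identity $\ft{f}\cdot\hat\delta_\Lambda = w\,\delta_0$. In this language \thmref{thmI3.1} is just the observation that a product of a continuous function with a distribution can be supported at $\{0\}$ only if the function vanishes on $\supp(\hat\delta_\Lambda)\setminus\{0\}$. The point of the counterexample is that this necessary condition refers to the zero set alone, whereas the identity $\ft{f}\,\hat\delta_\Lambda = w\delta_0$ is sensitive to the \emph{order} of vanishing: if $\hat\delta_\Lambda$ is a distribution of positive order, then annihilating it forces $\ft f$ to vanish to a matching order, and this is precisely a failure of spectral synthesis. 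The whole construction is therefore driven by the need to realise, inside $\hat\delta_\Lambda$, a distribution of order $\ge 1$ carried by a thin set, together with a unit mass $\delta_0$ coming from the density of $\Lambda$.

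First I would fix the analytic input. I would choose a compact set $C\subset(0,\half)$, a finite positive measure $\mu$ supported on $C$, and a smooth function $h$ that vanishes on $C$ but with $\int_C h'\,d\mu \ne 0$; such a configuration is exactly a witness that $C$ is a set of non-synthesis, so at this step I would import or build a non-synthesis example, arranging in addition that $\hat\mu(n) = O(1/|n|)$. The role of this decay is that the distribution $\mu'$ then has \emph{bounded} Fourier coefficients, which is what permits realising it as the Fourier transform of a measure $\delta_\Lambda$ of bounded density. Concretely I would take $\Lambda = \{\,n + a_n : n\in\Z\,\}$, a perturbation of the integers, with the shifts $a_n$ read off from the Fourier coefficients of $\mu'$ divided by the smooth factor $-2\pi i t$ near $C$ (legitimate since $C$ stays away from $0$), symmetrised to $C\cup(-C)$ so that $\Lambda$ is real, so that the principal part of $\hat\delta_\Lambda-\delta_\Z$ is a periodization of $\mu'$. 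Bounded Fourier coefficients give bounded $a_n$, hence bounded density, and one designs the construction so that inside $(-1,1)$ one has $\supp(\hat\delta_\Lambda)=\{0\}\cup C\cup(-C)$ with $0$ isolated.

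With $\Lambda$ in hand the two functions almost write themselves. I would take $\ft g := h$ and $\ft f := h^2$, up to a normalising constant fixing $\ft f(0)=\ft g(0)=w$, chosen smooth, even and supported in $(-1,1)$, so that $f,g$ are real-valued Schwartz functions, hence in $L^1(\R)$, and so that $\ft f,\ft g$ automatically vanish on $\Z\setminus\{0\}$ and on $C\cup(-C)$. Because $\ft f=(\ft g)^2$ we have $\zfft=\zgft$ identically, so the two Fourier transforms have \emph{exactly} the same zero set. On the other hand, using $\ft f\equiv 0$ on $C=\supp\mu$ one computes $\ft f\cdot\mu' = -(\ft f)'\mu = -2hh'\mu = 0$, whence $\ft f\,\hat\delta_\Lambda = w\delta_0$ and $f+\Lambda$ is a tiling at level $w$; whereas $\ft g\cdot\mu' = -h'\mu$ is a nonzero measure carried by $C\subset(0,\half)$, so $\ft g\,\hat\delta_\Lambda$ is not supported at $\{0\}$ and $g+\Lambda$ is a tiling at no level.

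The main obstacle is the middle step: producing a bounded perturbation of the integers whose $\hat\delta_\Lambda$ genuinely carries an order-one, non-synthesisable component on the thin set $C$. The bounded-density constraint is what forces the work — it rules out placing a derivative $\delta_{t_0}'$ at a single frequency, since its Fourier coefficients would grow linearly and the shifts $a_n$ would be unbounded, which is precisely why $C$ must be a non-synthesis set and $\mu$ must be chosen with $\hat\mu(n)=O(1/|n|)$. The remaining technical points — controlling the nonlinear terms in $e^{-2\pi i a_n t}-1$ so that they neither smear the support nor lower the order of the principal part, and verifying that $0$ really stays isolated in $\supp(\hat\delta_\Lambda)$ so that the value $\ft f(0)=w$ is free to be any prescribed scalar — are where the estimates concentrate.
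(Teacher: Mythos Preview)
Your proposal has a genuine gap at the very first step, the choice of analytic input. You ask for a smooth function $h$ vanishing on $C=\supp(\mu)$ with $\int_C h'\,d\mu\neq 0$, together with $\hat\mu(n)=O(1/|n|)$. The decay forces $\mu$ to be a continuous measure (an atom at $t_0$ would contribute a multiple of $\delta_{t_0}'$ to $\mu'$, whose Fourier coefficients grow linearly---exactly the obstruction you yourself flag), and the support of a nonzero continuous measure is a perfect set. But if $h$ is $C^1$ and vanishes on a perfect set $C$, then $h'$ vanishes on $C$ as well: every point of $C$ is a limit of points of $C$, along which the difference quotients of $h$ are identically zero. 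Hence $h'\mu=0$, so $\int_C h'\,d\mu=0$ and your configuration is impossible. In your own computation this gives $\hat g\cdot\mu'=-h'\mu=0$, and $g$ tiles just as $f$ does.

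This is not an accident but precisely the Beurling--Pollard phenomenon recalled in the paper: if $\phi\in A\cap\operatorname{Lip}(\tfrac12)$ (in particular if $\phi$ is smooth) vanishes on $\supp(S)$, then $S\phi=0$. The failure of spectral synthesis is therefore \emph{not} about the order of vanishing of a smooth function against an order-one distribution; it is about the \emph{roughness} of the annihilating function. The paper accordingly takes $\hat g$ built from Malliavin's $\phi\in A(\R)$, which is genuinely non-Lipschitz, and $\hat f$ built from a \emph{smooth} $\psi$ chosen to have the same zero set as $\phi$; then $S\psi=0$ automatically while $S\phi\neq 0$ by Malliavin. Your squaring device $\hat f=h^2$, $\hat g=h$, with both smooth, cannot separate tiling from non-tiling. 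The construction of $\Lambda$ realising $\hat\delta_\Lambda=\delta_0+r(S+\widetilde S)$ on an interval (via Kargaev's implicit-function method) is indeed the main technical work, but that step only becomes relevant once the correct pair $(S,\phi)$ is in hand.
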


Moreover, we will show that if the given scalar $w$ is 
\emph{positive}, then the functions $f, g$ can be chosen positive 
as well.

It follows that the necessary
condition \eqref{eqI3.1} is generally  not sufficient
for tiling:

\begin{corollary}
\label{corI8.2}
There exist a set $\Lambda \subset \R$  of bounded density
and a positive function $f \in L^1(\mathbb{R})$,
such that \eqref{eqI3.1} is satisfied however $f + \Lam$ is not a tiling
at any level.
\end{corollary}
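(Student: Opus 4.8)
The plan is to read off the corollary directly from \thmref{thmI8.1}, using the necessity result \thmref{thmI3.1} to transfer condition \eqref{eqI3.1} from a tiling function to a non-tiling one. First I would apply \thmref{thmI8.1} with a \emph{positive} choice of the scalar $w$; by the strengthening noted just after that theorem, this produces a set $\Lambda \sbt \R$ of bounded density together with two positive functions $f, g \in L^1(\R)$ whose Fourier transforms have the same zero set, $\zfft = \zgft$, such that $f + \Lambda$ is a tiling at level $w$ while $g + \Lambda$ is a tiling at no level.

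Next I would exploit the fact that $f + \Lambda$ is a genuine tiling: \thmref{thmI3.1} then applies to $f$ and yields the inclusion $\supp(\hat\delta_\Lambda) \setminus \{0\} \sbt \zfft$. Since the two zero sets coincide, the identical inclusion holds with $\zfft$ replaced by $\zgft$, so the function $g$ satisfies condition \eqref{eqI3.1} as well. Taking $g$ to be the function required by the statement, we thereby exhibit a positive $L^1$ function fulfilling \eqref{eqI3.1} whose translates along $\Lambda$ nonetheless form no tiling, which is precisely the assertion of the corollary.

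I do not expect any real obstacle here, since the entire content is already packaged inside \thmref{thmI8.1}. The only substantive observation is the one-line logical step that \eqref{eqI3.1} depends on $\hat{f}$ solely through its set of zeros, so two functions sharing the same Fourier zero set must satisfy or violate \eqref{eqI3.1} simultaneously, regardless of their tiling behaviour. The positivity of the resulting function is secured simply by having chosen $w > 0$ and invoking the strengthened form of the theorem.
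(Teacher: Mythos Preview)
Your argument is correct and is precisely the deduction the paper has in mind: the corollary is stated immediately after \thmref{thmI8.1} with the phrase ``It follows that\ldots'', and the implicit reasoning is exactly to apply \thmref{thmI3.1} to the tiling function $f$, then transfer \eqref{eqI3.1} to the non-tiling function $g$ via the equality $\zfft = \zgft$. The positivity comes, as you note, from choosing $w>0$ and using the strengthened form of \thmref{thmI8.1}.
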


But even stronger,  \thmref{thmI8.1}
shows that also no other condition can be given
in terms of the Fourier  zero set  $\zfft$ 
that would characterize
 the functions $f \in L^1(\mathbb{R})$ such
that $f + \Lam$ is a tiling,
even under the extra assumption
that $f$ is positive.

Our approach is based on the relation of the problem to
 Malliavin's non-spectral synthesis example \cite{Mal59c}.
The proof  involves the implicit function method
 due to Kargaev \cite{Kar82}, who proved the existence
of a set $\Omega\subset \mathbb R$ of finite measure
 such that the Fourier transform of its indicator function
vanishes on some interval.

% =======================================

\section{Preliminaries. Notation.}

In this section we recall some preliminary background 
and fix notation that will be used later on. 
For further details we refer the reader to \cite{Kah70}.

The closed support of 
 a Schwartz distribution $S$, or a function $\phi$,
on the real line $\R$ or on the circle  $\T = \R / \Z$,
is denoted by $\supp(S)$ or $\supp(\phi)$ respectively.

If $S$ is a Schwartz distribution on $\T$,
its Fourier coefficients $\ft{S}(n)$ are defined by
\[
\ft{S}(n) = \dotprod{S}{e^{-2 \pi i n t}}, \quad n \in \Z.
\]
The action of $S$ on a function $\phi \in C^\infty(\T)$
is denoted by $\dotprod{S}{\phi}$. We have
\begin{equation}
\label{eqR9.1}
\dotprod{S}{\phi} =  \sum_{n \in \Z} \ft{S}(n)\ft{\phi}(-n).
\end{equation}

Let $A(\T)$ be the
Wiener space of continuous functions $\phi$ on   $\T$ 
whose Fourier series converges absolutely.
 It is a Banach space endowed with the norm
\[
\|\phi\|_{A(\T)} = \sum_{n \in \Z} |\ft{\phi}(n)|.
\]

A distribution $S$ on $\T$ is called a \emph{pseudomeasure}
if $S$ can be extended
to a continuous linear functional on $A(\T)$. 
This is the case if and only if
the Fourier coefficients $\ft{S}(n)$ are bounded.
The space  $PM(\T)$ of  all pseudomeasures  
is a Banach space with the norm
\[
\|S\|_{PM(\T)} = \sup_{n \in \Z} |\ft{S}(n)|.
\] 
The duality
between the spaces $A(\T)$ and $PM(\T)$
is given by
\begin{equation}
\label{eqR9.2}
\dotprod{S}{\phi} =  \sum_{n \in \Z} \ft{S}(n)\ft{\phi}(-n),
\quad S \in PM(\T), \; \phi \in A(\T),
\end{equation}
which is consistent with \eqref{eqR9.1}.

In a similar way, we will denote by $A(\R)$ the
space of Fourier transforms of functions in $L^1(\R)$,
that is, $\phi \in A(\R)$ if and only if
\[
\phi(t) = \int_{\R} \ft{\phi}(x) e^{2 \pi i t x} dx, \quad
\ft{\phi} \in L^1(\R),
\quad
\|\phi\|_{A(\R)} = \|\ft{\phi}\|_{L^1(\R)}.
\]
The Banach space dual to $A(\R)$ is then 
the space $PM(\R)$ 
of   temperate distributions $S$ on $\R$ whose
Fourier transform $\ft{S}$ is in $L^\infty(\R)$.
The space $PM(\R)$ is normed as
\[
\|S\|_{PM(\R)} = \|\ft{S}\|_{L^\infty(\R)},
\]
and the duality
between the spaces $A(\R)$ and $PM(\R)$
is given by
\begin{equation}
\label{eqR9.3}
\dotprod{S}{\phi} =  \int_\R \ft{S}(x)\ft{\phi}(-x)dx,
\quad S \in PM(\R), \; \phi \in A(\R).
\end{equation}
The elements of the space $PM(\R)$
are called \emph{pseudomeasures} on $\R$.

The product $\phi \psi$ of two functions $\phi, \psi \in A$
(on either $\T$ or $\R$) is also in $A$, and
\[
\|\phi \psi\|_{A} \leq \|\phi\|_{A} \|\psi\|_{A}.
\]
If $S \in PM$ and $\phi \in A$, then
the product $S\phi$ is a pseudomeasure  defined by
\[
\dotprod{S\phi}{\psi} = \dotprod{S}{\phi \psi},
\quad \psi \in A,
\]
and we have
\[
\|S\phi\|_{PM} \leq \|S\|_{PM} \|\phi\|_{A}.
\]

If $S \in PM$, $\phi \in A$ and if $\phi$ vanishes
in a \emph{neighborhood} of $\supp(S)$, then $S\phi=0$. 
This is obvious from the definition 
of $\supp(S)$ if $\phi$ is a smooth function
of compact support, while for a general $\phi \in A$ this 
 follows by approximation.

If $S$ is a Schwartz distribution on $\R$ supported on 
a compact interval $I=[a,b]$,  then
its Fourier transform $\ft{S}$ is 
 an infinitely smooth function on $\R$ given by
\[
\ft{S}(x) = \dotprod{S}{e^{-2 \pi i x t}}, \quad x \in \R.
\]
(In fact, $\ft{S}$ is the restriction
to $\R$ of an entire function of exponential type).

If $S$ is a distribution on $\R$ supported 
on an interval $I$ of length $|I| < 1$,
then $S$ may  be considered also as a distribution on $\T$,
and in this case we have
$S \in PM(\T)$ if and only if
$S \in PM(\R)$.
If, in addition, $\phi$ is   a  function on $\R$
such that
$\supp(\phi)\sbt I$, then
$\phi \in A(\T)$ if and only if $\phi \in A(\R)$,
and the action $\dotprod{S}{\phi}$ then has the same
value with respect to either definition
\eqref{eqR9.2} or \eqref{eqR9.3}.

% =======================================

\section{Malliavin's non-spectral synthesis phenomenon} \label{secM1}

\subsection{}
The \emph{spectral synthesis problem},  posed by
Beurling, asks the following:
Let $V$ be a closed,
 linear subspace of the space $\ell^\infty(\Z)$
endowed with the weak* topology (as the dual of $\ell^1$).
We say that $V$ is \emph{translation-invariant}
if whenever a sequence $\{c(n)\}$ 
belongs to $V$, then so do all of the translates of
 $\{c(n)\}$.
Define the  \emph{spectrum} $\sigma(V)$ of a
translation-invariant subspace $V$ to be
the (closed) set of points $t \in \T$
such that the  sequence
$e_t := \{\exp (2 \pi i n t)\}$ is in $V$.
Is it true that $V$ is generated by the exponentials
$e_t$, $t \in \sig(V)$, i.e.\  is $V$ the weak* closure
of the linear span of these exponentials?

There are 
also other, equivalent
formulations of the 
spectral synthesis problem,
see  \cite[Chapter IX]{KS94}.
 One of them is the following:
Let $S \in PM(\T)$, $\phi \in A(\T)$, and
assume that $\phi$ vanishes on $\supp(S)$.
Does it follows that
$\dotprod{S}{\phi}=0$?

The answer to the last question is affirmative if $\phi$ is smooth, or,
more generally, if $\phi \in A(\T) \cap \operatorname{Lip}(\half)$.
This result is due to Beurling and Pollard, 
see e.g.\ \cite[Chapter V, Section 5]{Kah70}.
However, it was proved by Malliavin
  that in the general case,
the question admits a negative answer:

 \begin{thm}[Malliavin \cite{Mal59a}, \cite{Mal59b}]
\label{thmM1.1}
There exist 
a pseudomeasure $S \in PM(\T)$
and 
a function $\phi \in A(\T)$ 
such that
$\phi$ vanishes on $\supp(S)$,
but $\dotprod{S}{\phi} \neq 0$.
\end{thm}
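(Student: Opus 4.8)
The plan is to recast the statement as the failure of spectral synthesis for a suitable closed set. By the duality \eqref{eqR9.2} between $A(\T)$ and $PM(\T)$, producing $S$ and $\phi$ with $\phi$ vanishing on $\supp(S)$ and $\dotprod{S}{\phi}\neq 0$ is the same as exhibiting a closed set $E\sbt\T$ that is not a set of synthesis, i.e.\ a pseudomeasure $S$ with $\supp(S)\sbt E$ that fails to annihilate some $\phi\in A(\T)$ vanishing on all of $E$. So it suffices to construct one such $E$ together with a witnessing pair $(S,\phi)$.

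The mechanism I would build on is the one isolated by L.\ Schwartz. If $\mu$ is a smooth density carried by a hypersurface and $D$ is a first-order operator transverse to it, then $S=D\mu$ is supported on the same surface, while a test function $\phi$ may be arranged to vanish on the surface and still satisfy $\dotprod{S}{\phi}=-\dotprod{\mu}{D\phi}\neq 0$, since its transverse derivative need not vanish. The one genuine requirement is that $S$ be an honest pseudomeasure, $\ft{S}\in L^\infty$. For the unit sphere in $\R^d$ this is a clean estimate: surface measure satisfies $\ft{\mu}(\xi)=O(|\xi|^{-(d-1)/2})$, and a first-order operator multiplies the spectrum by a factor $O(|\xi|)$, so $\ft{S}$ stays bounded exactly when $d\ge 3$. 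This already produces a non-synthesis set in $\R^3$.

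The main obstacle is that the theorem lives on $\T$, i.e.\ in one dimension, where this smoothing threshold is never met and no transverse direction is available. To descend to dimension one I would transfer the obstruction through Varopoulos' tensor algebras. Choose Cantor sets $E_1,\dots,E_k\sbt\T$ that are arithmetically independent and of Helson type, so that the sum map on $E_1\times\dots\times E_k$ is injective and the restriction algebra $A(E_1+\dots+E_k)$ is isomorphic, with equivalent norms, to the projective tensor algebra $C(E_1)\hat\otimes\cdots\hat\otimes C(E_k)$; dually, the pseudomeasures on the product correspond to pseudomeasures on $\T$. With enough factors $k$ the tensor algebra supplies the room that dimension supplied above, and one realizes inside the product a thin set carrying a transverse-derivative functional of bounded spectrum, together with a tensor-algebra function vanishing on that set but paired nontrivially with it.

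Transporting both objects back through the isomorphism then yields the desired $E=E_1+\dots+E_k$, a pseudomeasure $S\in PM(\T)$ with $\supp(S)\sbt E$, and a function $\phi\in A(\T)$ with $\phi$ vanishing on $E$ but $\dotprod{S}{\phi}\neq 0$. The steps I expect to carry the weight of the argument are the construction of the independent Cantor sets with the precise independence needed for the norm-equivalent embedding of the tensor algebra into $A(\T)$, and the verification that this embedding preserves the obstruction --- in particular that the transverse-derivative functional remains of bounded spectrum as an element of $PM(\T)$, and that the vanishing of $\phi$ on $E$ survives the transfer.
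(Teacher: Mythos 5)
The paper does not prove this statement at all: Theorem~\ref{thmM1.1} is Malliavin's classical theorem, quoted as a black box with references to \cite{Mal59a}, \cite{Mal59b} (and, for expositions, to \cite{KS94}, \cite{Kah70}, \cite{Rud62}, \cite{GM79}). So there is no internal proof to compare against, and the only question is whether your sketch is a viable route. It is: what you describe is precisely the Schwartz--Varopoulos proof of Malliavin's theorem, which is genuinely different from Malliavin's original argument. Malliavin worked directly on the line (and on the Bohr compactification), constructing a real function $f$ whose exponentials $e^{iuf}$ have $A$-norm growing polynomially in $u$ but not too fast, and then integrating in $u$ to manufacture the pseudomeasure and the annihilated function; your route instead starts from Schwartz's observation that the normal derivative of surface measure on $S^{2}\sbt\R^{3}$ is a pseudomeasure witnessing non-synthesis (the exponent count $O(|\xi|\cdot|\xi|^{-(d-1)/2})$ bounded iff $d\ge 3$ is correct), and transfers it to $\T$ via Varopoulos' isomorphism $A(E_{1}+\dots+E_{k})\cong C(E_{1})\hat{\otimes}\cdots\hat{\otimes}C(E_{k})$ for suitably independent Kronecker-type sets. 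What your approach buys is conceptual clarity (non-synthesis in one dimension is exhibited as a disguised transversality phenomenon from three dimensions) at the price of the two heavy inputs you correctly flag: the construction of the independent Cantor sets with the norm-equivalent tensor-algebra isomorphism, and the realization of the sphere obstruction inside the tensor algebra. These are substantial theorems of Varopoulos, not routine verifications, so as written your text is an accurate roadmap rather than a self-contained proof; since the functional obtained on the quotient $A(\T)/I(E)$ automatically extends to an element of $PM(\T)$ supported on $E$ (the dual of the quotient is $I(E)^{\perp}\sbt J(E)^{\perp}$), the transfer step itself raises no additional difficulty once those inputs are granted. One small point of care in the three-dimensional model: if you take $\phi=(|x|^{2}-1)\psi$ with $\psi$ a radial cutoff and $S=\partial_{1}\mu$ with $\mu$ the uniform surface measure, then $\dotprod{S}{\phi}=-\int_{S^{2}}2x_{1}\psi\,d\sigma=0$ by symmetry, so you must break the symmetry in either the density carried by the sphere or in $\psi$ to get a nonzero pairing.
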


The spectral synthesis problem can be posed more generally 
in any locally compact abelian group $G$
(where the case discussed above corresponds to the
group $G=\Z$).
For compact groups the problem admits a positive answer;
while Malliavin showed \cite{Mal59c} that the answer
is negative for all non-compact groups $G$.
%(In the case discussed
%above, the group is $G=\Z$).
%(for $G=\Z$ the result corresponds to \thmref{thmM1.1} above).

For more details on the subject we refer the reader
to \cite[Chapter IX]{KS94},
\cite[Chapter V]{Kah70},
\cite[Chapter 7]{Rud62},
\cite{Ben75},
\cite[Chapter 3]{GM79}.

\subsection{}
Let $S \in PM(\T)$ and
$\phi \in A(\T)$ 
be given by Malliavin's theorem (\thmref{thmM1.1}), that is,
$\phi$ vanishes on $\supp(S)$
while $\dotprod{S}{\phi} \neq 0$.
Since $\phi$ does not vanish everywhere on the circle $\T$, 
there is  an open interval $I$ of length $|I| <1$ such that
$\supp(S) \sbt I$. Hence we may
regard $S$ also as a distribution on $\R$, and
we have $S \in PM(\R)$.
By multiplying $\phi$ on a smooth function supported
on $I$ and which
is equal to $1$ in a neighborhood of $\supp(S)$,
we may assume that $\supp(\phi) \sbt I$ as well,
and consequently $\phi \in A(\R)$.

Furthermore, by applying a linear change of variable
to $S$ and $\phi$, we may actually suppose that $I$ is 
an arbitrary open interval on $\R$. We shall 
take $I = (a,b)$
where $a, b$ are any two numbers satisfying $0<a<b<\half$.

For each $r>0$ we now define a distribution 
$T_r \in PM(\R)$ by
 \begin{equation}
\label{eqR14.1}
T_r  := \delta_0 + r (S + \widetilde{S}),
 \end{equation}
where $\widetilde{S}(t) := \overline{S(-t)}$.\footnote{The
distribution $\widetilde{S}$ can be more formally defined by
$\dotprod{\widetilde{S}}{\psi} := 
\overline{\dotprod{S}{\widetilde{\psi}}}
$ where $\widetilde{\psi}(t) := \overline{\psi(-t)}$.}
We will prove the following result:

 \begin{thm}
\label{thmR7.15}
Given any  $\eps >0$ there exists
 a real sequence $\Lam = \{\lam_n\}$, $n\in\Z$,
satisfying $|\lam_n - n | \leq \eps$ for all $n$,
such that for some $r>0$
we have $\ft{\delta}_\Lam = T_r$ in the interval $(-b,b)$.
 \end{thm}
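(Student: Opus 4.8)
The plan is to localise everything to the circle $\T=\R/\Z$ and to build $\Lam$ as a perturbation $\lam_n=n+x_n$ of the integers, solving for the real sequence $x=(x_n)\in\ell^\infty(\Z)$ by a fixed point argument. First I would note that, by the Poisson summation formula, $\ft{\delta}_\Z=\sum_m\delta_m$, so $\ft{\delta}_\Z=\delta_0$ on $(-\half,\half)$ and in particular on $(-b,b)$; hence it suffices to arrange $W:=\ft{\delta}_\Lam-\ft{\delta}_\Z=r(S+\widetilde{S})$ there. Pairing $W$ with a test function supported in $(-\half,\half)$ and expanding $e^{-2\pi i x_n t}$ in its Taylor series yields the identity, valid in $PM(\T)$ once $\|x\|_\infty$ is small,
\[
W=\sum_{k\geq1}\frac{(-2\pi i t)^k}{k!}\,c_k,\qquad c_k:=\sum_{n}x_n^k\,e^{-2\pi i n t}.
\]
The decisive point is that $c_k$ is \emph{exactly} the pseudomeasure whose Fourier coefficients are $\{x_n^k\}_n$, so $\|c_k\|_{PM(\T)}=\|x\|_\infty^k$ and the series converges geometrically.

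Next I would introduce $U:=(S+\widetilde{S})\,\phi$, where $\phi\in A(\T)$ is a symmetric cut-off equal to $(-2\pi i t)^{-1}$ near $\supp(S+\widetilde{S})\sbt(-b,-a)\cup(a,b)$; this is legitimate because $S+\widetilde{S}$ vanishes around the origin, and it gives $(-2\pi i t)U=S+\widetilde{S}$. Factoring $-2\pi i t$ out of the series, $W=(-2\pi i t)\bigl(c_1+E(x)\bigr)$ with $E(x):=\sum_{k\geq2}\frac{(-2\pi i t)^{k-1}}{k!}c_k$ of size $\|x\|_\infty^2$ in $PM(\T)$. Therefore the single clean equation $c_1=rU-E(x)$ in $PM(\T)$ already forces $W=(-2\pi i t)\,rU=r(S+\widetilde{S})$ as pseudomeasures, whence $\ft{\delta}_\Lam=T_r$ on $(-b,b)$. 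I would deliberately solve this sufficient equation rather than the product equation, so as to sidestep the $\delta_0$-ambiguity inherent in dividing by $t$.

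Since $\ft{c_1}(m)=x_{-m}$, the equation $c_1=rU-E(x)$ is precisely the fixed point equation $x=\mathcal T(x)$, where $\mathcal T(x)_n$ is the $(-n)$-th Fourier coefficient of $rU-E(x)$. On the ball $\{\|x\|_\infty\leq\eps\}$ of $\ell^\infty(\Z)$ I would check, using $\|c_k\|_{PM(\T)}=\|x\|_\infty^k$, that $E$ is Lipschitz with constant at most $e^{C\eps}-1$ (where $C$ is the $A(\T)$-norm of the multiplier $2\pi t$) and that $\|\mathcal T(x)\|_\infty\leq r\|U\|_{PM(\T)}+\|E(x)\|_{PM(\T)}$, the last term being of size $\eps^2$. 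Choosing $\eps$ small and then $r$ small makes $\mathcal T$ a contraction of the ball into itself, and the Banach fixed point theorem produces a solution with $|x_n|\leq\eps$.

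The one step needing genuine care, and the main obstacle, is that the resulting $\lam_n$ be \emph{real}. I would encode reality through the involution $\psi\mapsto\widetilde{\psi}$: a pseudomeasure $P\in PM(\T)$ has all Fourier coefficients real precisely when $\widetilde{P}=P$ (since $\ft{\widetilde{P}}(n)=\overline{\ft{P}(n)}$), and $x$ is real exactly when $c_1$ is $\widetilde{\ }$-invariant. Now $\widetilde{S+\widetilde{S}}=S+\widetilde{S}$ by construction, the symmetric cut-off gives $\widetilde{U}=U$, and $\widetilde{(-2\pi it)^{k-1}c_k}=(-2\pi it)^{k-1}c_k$ whenever $x$ is real; hence $\mathcal T$ preserves real sequences and the fixed point is real. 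Thus $\Lam=\{n+x_n\}$ is a real set with $|\lam_n-n|\leq\eps$; the bound $\eps<\half$ makes the $\lam_n$ distinct and $\Lam$ of bounded density, and by construction $\ft{\delta}_\Lam=\delta_0+r(S+\widetilde{S})=T_r$ on $(-b,b)$ for the chosen small positive $r$. Reality is exactly the reason the symmetric combination $S+\widetilde{S}$, rather than $S$ alone, appears in the definition of $T_r$.
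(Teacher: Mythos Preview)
Your strategy is Kargaev's fixed-point scheme, the same method the paper uses, and your reduction to the single equation $c_1=rU-E(x)$ in $PM(\T)$ (together with the reality check via the involution $\widetilde{\ }$) is correct in outline. There is, however, one concrete gap that breaks the estimates as written.

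You invoke ``the $A(\T)$-norm of the multiplier $2\pi t$'' to bound $\|(-2\pi it)^{k-1}c_k\|_{PM(\T)}$ and to get the Lipschitz constant of $E$. But the sawtooth function $t$ on $\T=\R/\Z$ is \emph{not} in $A(\T)$: its Fourier coefficients decay like $1/n$, so $\|t\|_{A(\T)}=\infty$, and your constant $C$ is infinite. Consequently neither the convergence of $E(x)=\sum_{k\ge2}\frac{(-2\pi it)^{k-1}}{k!}c_k$ in $PM(\T)$ nor the contraction estimate $e^{C\eps}-1<1$ follows from what you wrote. The remedy is exactly the device the paper employs: fix $l\in(b,\tfrac12)$ and a smooth even cutoff $\Phi$ with $\Phi\equiv1$ on $[-b,b]$ and $\Phi\equiv0$ outside $(-l,l)$, and replace $t$ by $t\Phi(t)$ throughout your multiplier calculus. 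Then $t\Phi\in C^\infty(\T)\subset A(\T)$, your bound $\|(-2\pi it\Phi)^{k-1}c_k\|_{PM(\T)}\le\|2\pi t\Phi\|_{A(\T)}^{\,k-1}\|x\|_\infty^k$ is finite, and on $(-b,b)$ nothing changes since $\Phi=1$ there; the rest of your argument then goes through verbatim, including the reality step (note $\widetilde{t\Phi}=t\Phi$).

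For comparison, the paper organises the same computation by summing over the lattice index $n$ rather than over the Taylor index $k$: it writes the quadratic remainder as $(R\alpha)(t)=\sum_n e^{2\pi int}\,\frac{e^{2\pi i\alpha_nt}-1-2\pi i\alpha_nt}{2\pi it}\,\Phi(t)$ and proves per-term Fourier-coefficient decay (its Lemmas~4.2--4.3) before summing. Your $k$-grouping, once the cutoff is inserted, yields the same $PM$-estimates in one line via the algebra inequality $\|S\psi\|_{PM}\le\|S\|_{PM}\|\psi\|_A$, which is a genuine simplification; the paper's $n$-grouping, on the other hand, makes the passage from the fixed point back to the distributional identity $\hat F=S$ on $(-b,b)$ slightly more transparent. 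Your division-by-$t$ via the auxiliary $U=(S+\widetilde S)\phi$ corresponds precisely to the paper's passage through the antiderivative $F$ and the multiplier $\Psi(t)=-1/(2\pi it)$ in its \thmref{thmR7.5}.
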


The proof of this theorem will be given in the next section. 
 Our goal in the present section is to complete the
proof of \thmref{thmI8.1} based on this result.
We will show that $\Lam$ 
has the property from the statement of the theorem:
given any real scalar $w$ there are two real-valued functions 
$f, g \in L^1(\mathbb{R})$
whose Fourier transforms have the same set of zeros,
but such that $f + \Lambda$ is a tiling at level $w$ 
while $g + \Lambda$ is not a tiling at any level.
Moreover, if the given scalar $w$ is 
\emph{positive}, then the
functions $f, g$ can be chosen positive
as well.

\subsection{}
Since the set $\Lam$ has bounded density,
 for any  $h \in L^1(\R)$ the convolution
$h \ast \delta_\Lam$ is a locally integrable
function satisfying
\begin{equation}
  \label{eqP1.13}
\sup_{x \in \R} \int_{x}^{x+1} |(h \ast \delta_\Lam)(y)| dy < +\infty,
\end{equation}
see \cite[Lemma 2.2]{KL96}. This implies that $h \ast \delta_\Lambda$ 
is a temperate distribution on $\mathbb R$.

\begin{lem}
  \label{lemR14.3}
Let $h$ be a function in $L^1(\R)$ such that $\supp(\ft{h}) \sbt (-b,b)$.
Then the Fourier transform of $h \ast \delta_\Lam$
is the pseudomeasure $T_r  \cdot \ft{h}$.
\end{lem}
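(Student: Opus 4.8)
The plan is to read the asserted identity as the convolution theorem $\widehat{h \ast \delta_\Lam} = \ft{h}\cdot\ft{\delta}_\Lam$ followed by the localization supplied by \thmref{thmR7.15}. Before computing anything I would check that both sides make sense. Since $h \in L^1(\R)$ one has $\ft{h}\in A(\R)$ (its $L^1$ density being $x\mapsto h(-x)$), and by hypothesis $\supp(\ft{h})$ is a closed subset of the bounded interval $(-b,b)$, hence compact and therefore bounded away from the endpoints $\pm b$; in particular $\ft{h}$ vanishes in a neighborhood of $\R\setminus(-b,b)$. Consequently $T_r\,\ft{h}$ is a well-defined pseudomeasure, as $T_r\in PM(\R)$ and $\ft{h}\in A(\R)$. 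On the other side, $h\ast\delta_\Lam$ is a temperate distribution by \eqref{eqP1.13}, so its Fourier transform is defined.

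Next I would compute $\widehat{h\ast\delta_\Lam}$ by pairing against an arbitrary $\psi\in\mathcal{S}(\R)$. Writing $h\ast\delta_\Lam$ as the locally integrable function $\sum_\lambda h(\cdot-\lambda)$ and using $\dotprod{\ft F}{\psi}=\dotprod{F}{\ft\psi}$ gives
\[
\dotprod{\widehat{h\ast\delta_\Lam}}{\psi}
= \int_\R\Big(\sum_{\lambda\in\Lam}h(x-\lambda)\Big)\ft{\psi}(x)\,dx
= \sum_{\lambda\in\Lam}\int_\R h(x-\lambda)\ft{\psi}(x)\,dx .
\]
The interchange of sum and integral is the first point to justify, and this is where the bounded density of $\Lam$ enters: writing $g:=\ft{h}\,\psi\in L^1(\R)$, a short substitution turns each summand into $\ft{g}(\lambda)=\int_\R g(t)\,e^{-2\pi i\lambda t}\,dt$, and the bound
\[
\sum_{\lambda\in\Lam}|\ft{g}(\lambda)|
\le \int_\R|h(y)|\sum_{\lambda\in\Lam}|\ft{\psi}(y+\lambda)|\,dy
\le \|h\|_{L^1(\R)}\cdot\sup_{y}\sum_{\lambda\in\Lam}|\ft{\psi}(y+\lambda)|<\infty
\]
(finite because $\ft{\psi}$ is Schwartz and $\Lam$ has bounded density) both legitimizes Fubini and shows the resulting series converges absolutely, so that $\dotprod{\widehat{h\ast\delta_\Lam}}{\psi}=\sum_{\lambda}\ft{g}(\lambda)$.

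It remains to identify $\sum_\lambda\ft{g}(\lambda)$ with $\dotprod{T_r}{g}=\dotprod{T_r\,\ft{h}}{\psi}$. Formally $\sum_\lambda\ft{g}(\lambda)=\dotprod{\ft{\delta}_\Lam}{g}$, and since $g$ is supported in the fixed compact set $\supp(\ft{h})\sbt(-b,b)$, on which $\ft{\delta}_\Lam$ agrees with $T_r$ by \thmref{thmR7.15}, this should equal $\dotprod{T_r}{g}$. To make this rigorous without assuming $g$ smooth, I would mollify: take $g_n:=g\ast\rho_n$ with $\rho_n$ a smooth approximate identity chosen so that $\ft{\rho_n}\to1$ pointwise with $|\ft{\rho_n}|\le1$; then $g_n$ is smooth, supported in a compact subset of $(-b,b)$ for large $n$, and $\|g_n-g\|_{A(\R)}\to0$. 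For the smooth $g_n$ the identity $\sum_\lambda\ft{g_n}(\lambda)=\dotprod{\ft{\delta}_\Lam}{g_n}=\dotprod{T_r}{g_n}$ is literal (the last equality by \thmref{thmR7.15}, since $\supp(g_n)\sbt(-b,b)$). Letting $n\to\infty$, the right side tends to $\dotprod{T_r}{g}$ by the $PM$--$A$ duality, while the left side tends to $\sum_\lambda\ft{g}(\lambda)$ by dominated convergence, using $\ft{g_n}(\lambda)=\ft{g}(\lambda)\ft{\rho_n}(\lambda)$ together with the absolute summability $\sum_\lambda|\ft{g}(\lambda)|<\infty$ found above. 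This yields $\dotprod{\widehat{h\ast\delta_\Lam}}{\psi}=\dotprod{T_r\,\ft{h}}{\psi}$ for every $\psi$, which is the claim.

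The step I expect to be the main obstacle is this last identification: passing from the distribution $\ft{\delta}_\Lam$, which is \emph{not} a pseudomeasure on all of $\R$, to the genuine pseudomeasure $T_r$, and doing so against a test function lying only in $A(\R)$ rather than in $\mathcal{S}(\R)$. The two facts that resolve it are the compact containment $\supp(\ft{h})\sbt(-b,b)$ (which guarantees both that $T_r\,\ft{h}$ is a legitimate $PM$--$A$ product and that mollification keeps supports inside $(-b,b)$) and the absolute summability of $\sum_\lambda\ft{g}(\lambda)$ coming from bounded density. An alternative that avoids the non-smooth pairing entirely is to regularize from the start: choose a smooth $\eta$ with $\supp(\eta)\sbt(-b,b)$ and $\eta\equiv1$ near $\supp(\ft{h})$, let $u\in\mathcal{S}(\R)$ satisfy $\ft{u}=\eta$ so that $u\ast h=h$, and use that $u\ast\delta_\Lam$ is a smooth function with $\widehat{u\ast\delta_\Lam}=\eta\,\ft{\delta}_\Lam=\eta\,T_r$; then $h\ast\delta_\Lam=h\ast(u\ast\delta_\Lam)$ and the multiplier identity $\widehat{h\ast\delta_\Lam}=\ft{h}\cdot\eta\,T_r=\ft{h}\,T_r$ follows from the same Fubini argument, now applied only to honest pseudomeasures.
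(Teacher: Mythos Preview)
Your proof is correct and follows essentially the same route as the paper's: pair against a Schwartz test function, mollify $g=\ft{h}\cdot\psi$ so that the compactly supported smooth approximants lie inside $(-b,b)$ where $\ft{\delta}_\Lam$ coincides with $T_r$, and pass to the limit using the $PM$--$A$ duality on one side and dominated convergence on the other, the latter justified by the absolute summability $\sum_\lambda\int|\ft{\psi}(y+\lambda)||h(y)|\,dy<\infty$ coming from bounded density and Schwartz decay. This is exactly the paper's argument (its ``Claim'' is your summability bound, and its $q_\eps=(\ft{h}\cdot\beta)\ast\ft{\chi}_\eps$ is your $g_n=g\ast\rho_n$), only with the mollification and the direct computation presented in the opposite order.
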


\begin{proof}
The assertion means that for any
Schwartz function $\beta$ we have
\begin{equation}
  \label{eqP1.10}
\int_{\R} (h \ast \delta_\Lam)(x) \, \ft{\beta}(x) \, dx = 
\dotprod{T_r}{\ft{h} \cdot \beta}.
\end{equation}

Let $\chi$ be a Schwartz function whose Fourier transform
$\ft{\chi}$ is nonnegative, has compact support,
 $\int \ft{\chi}(t) dt =1$, and for each $\eps > 0$ let
$\chi_\eps(x) := \chi( \eps x)$. Let
$q_\eps := (\ft{h} \cdot \beta) \ast \ft{\chi}_\eps$,
then $q_\eps$ is an infinitely smooth function with compact support.
As $\eps \to 0$, the function $q_\eps$ remains supported on
a certain closed interval $J$ contained in $(-b,b)$, and
$q_\eps$ converges to 
$\ft{h}\cdot   \beta$ in the space $A(\R)$. The assumption that
 $\ft{\delta}_\Lam = T_r$ in $(-b,b)$ thus implies that
\begin{equation}
  \label{eqR12.3}
\lim_{\eps \to 0} \dotprod{\ft{\delta}_\Lam}{q_\eps} = 
\lim_{\eps \to 0} \dotprod{T_r}{q_\eps} = 
\dotprod{T_r}{\ft{h}\cdot  \beta}.
\end{equation}

The function $\beta$ is the Fourier transform of some function
$\alpha$ in the Schwartz class. Let $p_\eps := (h \ast \alpha) \cdot \chi_\eps$,
then $p_\eps$ is a smooth function in $L^1(\R)$ and
we have $\ft{p}_\eps = q_\eps$.
Since $q_\eps$ belongs to the Schwartz 
space, the same is true for $p_\eps$, and it follows that

\begin{align}
\dotprod{\ft{\delta}_\Lam}{q_\eps} &= \dotprod{\delta_\Lam}{\ft{q}_\eps}
= \sum_{\lambda\in\Lambda}  {p_\eps}(-\lam)
= \sum_{\lambda\in\Lambda}
   (h \ast \alpha)(-\lam) \, \chi_\eps(-\lam)\nonumber\\
&=\sum_{\lambda\in\Lambda} \chi_\eps(-\lam) \int_{\mathbb R}\alpha(-x) h(x-\lambda)dx.
  \label{eqR12.7}
\end{align}

Now we need the following:
\begin{claim*}
We have
\begin{equation}\label{eqR14.21}
\sum_{\lambda\in\Lambda} 
 \int_{\mathbb R}|\alpha(-x)|\cdot|h(x-\lambda)|dx<+\infty.
\end{equation}
\end{claim*}

We observe that $|\chi_\eps(-\lam)| \leq 1$ and
$\chi_\eps(-\lam) \to 1$ as $\eps \to 0$ for each $\lam$.
Hence the claim allows us to apply 
the  dominated convergence theorem
to the sum \eqref{eqR12.7}, which yields
\begin{equation}
  \label{eqR12.11}
\lim_{\eps \to 0} \dotprod{\ft{\delta}_\Lam}{q_\eps} 
= \sum_{\lambda\in\Lambda}  \int_{\mathbb R}\alpha(-x)  h(x-\lambda)dx.
\end{equation}
The claim also allows us to exchange the sum and integral in \eqref{eqR12.11},
and it follows that
\begin{equation}
  \label{eqR12.12}
\lim_{\eps \to 0} \dotprod{\ft{\delta}_\Lam}{q_\eps} 
= \int_{\mathbb R}\alpha(-x) \sum_{\lambda\in\Lambda} h(x-\lambda)dx
= \int_{\R} (h \ast \delta_\Lam)(x) \, \ft{\beta}(x) \, dx.
\end{equation}
Comparing \eqref{eqR12.3} and
\eqref{eqR12.12}, we see that
\eqref{eqP1.10} holds.

It remains to prove the claim. Indeed,  we have
\begin{equation}\label{eqR14.20}
\sum_{\lambda\in\Lambda} 
 \int_{\mathbb R}|\alpha(-x)|\cdot|h(x-\lambda)|dx
= \int_{\mathbb R}|h(-x)|
\sum_{\lambda\in\Lambda} 
|\alpha(x-\lambda)|dx.
\end{equation}
The inner sum on the right hand side of \eqref{eqR14.20}
 is a bounded function of $x$, since $\alpha$ is a 
Schwartz function and $\Lambda$ has bounded density, 
while $h$ is a function in $L^1(\mathbb R)$.
 Hence the integral in 
\eqref{eqR14.20} converges, and this completes the proof
of the lemma.
\end{proof}

\subsection{}
Recall that $S \in PM(\R)$,
$\supp(S) \sbt (a,b)$  where $0<a<b<\half$,
$\phi \in A(\R)$ is a function
with $\supp(\phi) \sbt (a,b)$,
$\phi$ vanishes on $\supp(S)$, and 
 $\dotprod{S}{\phi} \neq 0$.
Let $\psi$ be a smooth function
whose zero set   $\zeros(\psi)$ is the same
as $\zeros(\phi)$. In particular, we have
$\supp(\psi) \sbt (a,b)$ and
$\psi$ vanishes on $\supp(S)$ as well.
Let also $\tau$ be a smooth
function satisfying
$ \tau(-t) = \overline{ \tau(t)}$,
$\supp(\tau) \sbt (-a,a)$,
and $\tau(0)=1$.

Given a real scalar $w$ we define two functions 
$f, g \in L^1(\mathbb{R})$ by the conditions
 \begin{equation}
\label{eqR14.2}
\ft{f}(t) = w \cdot \tau(t) + \psi(t) + \overline{\psi(-t)},
 \end{equation}
 \begin{equation}
\label{eqR14.3}
\ft{g}(t) = w \cdot \tau(t) + \phi(t) + \overline{\phi(-t)},
 \end{equation}
then $f,g$ are real-valued and their
Fourier transforms have the same set of zeros.

By \lemref{lemR14.3} the Fourier transform of $f \ast \delta_\Lam$ is the pseudomeasure 
\[
\ft{f} \cdot T_r = w \delta_0 + r ( S \psi  + (\widetilde{S \psi)}) =
w \delta_0,
\]
where the first equality is 
due to \eqref{eqR14.1}
and \eqref{eqR14.2},
while the second equality 
is true since $\psi$ is
smooth and vanishes on $\supp(S)$, 
hence $S \psi =0$.
We conclude that 
$f \ast \delta_\Lam = w$ a.e., which means
that  $f + \Lambda$ is a tiling at level $w$.

In the same way,
 \lemref{lemR14.3}  implies 
that  the Fourier transform of $g \ast \delta_\Lam$ is 
\[
\ft{g} \cdot T_r = w \delta_0 + r (S \phi  + (\widetilde{S \phi)}).
\]
However in this case, $S \phi$ is not the
zero distribution, since $\dotprod{S\phi}{1} = 
\dotprod{S}{\phi} \neq 0$.
This shows  that 
the Fourier transform of $g \ast \delta_\Lam$ is 
not a scalar multiple
of $\delta_0$, and it follows that
 $g + \Lambda$ is not a tiling at any level.

\subsection{}
The above construction yields real-valued functions
$f$ and $g$, but 
these two functions need not be positive. We will now show that
if the given scalar $w$ is positive,  then 
the construction  can be modified so as to
yield \emph{everywhere positive} functions $f,g$.

In what follows, $\phi$ and
$\psi$  continue to denote the same two functions as above.

\emph{Step 1}: We show that there is a nonnegative sequence
 $\{c(k)\} \in \ell^1(\Z)$, such that
 \begin{equation}
\label{eqR16.1}
|\ft{\phi}(x)| \leq c(k), \quad k \in \Z, \; |x - k| \leq \half.
 \end{equation}

Indeed, we have $\phi  \in A(\R)$ and
$\supp(\phi) \sbt (a,b)$. Considered as a
function in $A(\T)$, $\phi$ may be expressed
on $(a,b)$ as  the sum
of an absolutely convergent Fourier
series. Hence 
there is a finite (complex) measure $\mu$ supported on $\Z$ 
such that $\phi(t) = \ft{\mu}(-t)$, $t \in (a,b)$.
 Let $\Phi$ be 
an infinitely smooth
 function  such that
$\Phi(t)=1$ if $t \in \supp(\phi)$,
while $\Phi(t)=0$ for $t \in \R \setminus (a,b)$.
Then $\phi(t) = \ft{\mu}(-t) \Phi(t)$ for
every $t \in \R$, which implies that
\[
\ft{\phi}(x) = (\mu \ast \ft{\Phi})(x) = \sum_{n \in \Z} {\mu}(n) \ft{\Phi}(x-n),
\quad x \in \R.
\]
Since the Fourier transform $\ft{\Phi}$ has fast decay, there is
a sequence 
 $\{\gamma(k)\} \in \ell^1(\Z)$ such that
$|\ft{\Phi}(x)| \leq |\gamma(k)|$ whenever
$|x - k| \leq \half$. It follows that
\[
|\ft{\phi}(x)| \leq c(k) := \sum_{n \in \Z} |{\mu}(n)| \cdot | \gamma(k-n)|,
\quad |x-k| \leq \half,
\]
which establishes \eqref{eqR16.1}.

\emph{Step 2}: We may assume that the same sequence
 $\{c(k)\}$ also satisfies
 \begin{equation}
\label{eqR16.6}
|\ft{\psi}(x)| \leq c(k), \quad k \in \Z, \; |x - k| \leq \half.
 \end{equation}

Indeed, we may apply the same procedure from Step 1 
also to the function $\psi$, and then define $\{c(k)\}$
to be the maximum of the
two sequences obtained from both steps.

\emph{Step 3}: Let  $\{d(k)\}$ be any positive sequence
in $\ell^1(\Z)$ such that
 \begin{equation}
\label{eqR16.12}
 d(k) >  c(k), \quad k \in \Z.
 \end{equation}
We show that there is 
$\tau \in A(\R)$ such that
$\supp(\tau) \sbt (-a,a)$ and
 \begin{equation}
\label{eqR16.7}
\ft{\tau}(x) \geq d(k), \quad k \in \Z, \; |x - k| \leq \half.
 \end{equation}

Let $\chi$ be an infinitely smooth
function, $\supp(\chi) \sbt
(-a,a)$, whose
Fourier transform $\ft{\chi}$ is  nonnegative 
and satisfies $\ft{\chi}(x)\geq 1$ on the interval
$[-\half, \half]$. 
Let $\tau(t) := \ft{\nu}(-t) \chi(t)$,
where $\nu$ is  a positive, finite measure supported on $\Z$ 
defined by 
$\nu := \sum_n d(n)\delta_n$. 
Then the function $\ft{\tau} = \nu \ast \ft{\chi}$ is in $L^1(\R)$,
so that we have $\tau \in A(\R)$, and
\[
\ft{\tau}(x)  = \sum_{n \in \Z} d(n) \ft{\chi}(x-n)
\geq  d(k) \ft{\chi}(x-k) \geq d(k),
\quad |x-k| \leq \half,
\]
which gives \eqref{eqR16.7}.

\emph{Step 4}: Now suppose that we are given a \emph{positive} scalar $w$.
We  then define the two functions $f,g$ by the conditions
 \begin{equation}
\label{eqR16.17}
\ft{f}(t) = w \cdot \tau(0)^{-1} \cdot \Big[ \tau(t) +  \frac{\psi(t) + \overline{\psi(-t)}}{2} \Big],
 \end{equation}
 \begin{equation}
\label{eqR16.18}
\ft{g}(t) = w \cdot \tau(0)^{-1} \cdot \Big[ \tau(t) +  \frac{\phi(t) + \overline{\phi(-t)}}{2} \Big].
 \end{equation}
We observe that by the definition of the function $\tau$ we have
\[
\textstyle
\tau(0) = \ft{\nu}(0) \chi(0) = (\int d\nu)(\int \ft{\chi}(x) dx) > 0,
\]
and in particular $\tau(0)$ is nonzero. 
Then $f,g$ are in $L^1(\R)$, their
Fourier transforms have the same set of zeros,
and by the same argument as before one can
verify that  $f + \Lambda$ is a tiling at level $w$,
while $g + \Lambda$ is not a tiling at any level.

Finally we check that
$f$ and $g$ are everywhere positive functions. Indeed, we have
\[
f(x) =
w \cdot \tau(0)^{-1} \cdot \Big[ \ft{\tau}(-x) + 
 \Re(\ft{\psi}(-x) ) \Big],
\]
\[
g(x) =
w \cdot \tau(0)^{-1} \cdot \Big[ \ft{\tau}(-x) + 
 \Re(\ft{\phi}(-x) ) \Big],
\]
and by \eqref{eqR16.1}, \eqref{eqR16.6},
\eqref{eqR16.12} and \eqref{eqR16.7}
 it follows that 
$f(x), g(x) >0$ for every $x \in \R$.

This completes the proof of \thmref{thmI8.1}
based on \thmref{thmR7.15}.
\qed

It remains to prove \thmref{thmR7.15}.
This will be done in the next section.

% =======================================

\section{Kargaev's implicit function method} \label{secR2}

\subsection{}
In \cite{Sap78}, Sapogov posed the 
following question: Does there exist a set $\Omega \sbt \R$
of positive and  finite  measure, 
such that the Fourier transform of its indicator function 
$\1_\Om$  vanishes on some open interval $(a,b)$?

 The question
was answered in the affirmative by 
Kargaev \cite{Kar82}. The solution was based on
an innovative application
 of the infinite-dimensional implicit function 
theorem, which established the existence of a 
set of the form
$ \Om = \bigcup_{n\in\Z} [n+\alpha_n, n+\beta_n]$,
where $\{\alpha_n\}$, $\{\beta_n\}$
are two real sequences in $\ell^1$, 
that has the above mentioned property.

The approach was later used in \cite{KL16} in
order to prove the existence of 
non-periodic tilings of $\R$ by 
 translates of a function $f$.
In that paper,  a self-contained
presentation of the method was given
in a simplified form, that does not invoke
 the infinite-dimensional implicit function
theorem.

In this section, we use an adapted version
of Kargaev's method in order to prove 
 \thmref{thmR7.15} (and more, in fact).
The presentation below generally 
follows the lines of \cite[Sections 2, 3]{KL16},
but the proof also requires some additional arguments.

\subsection{}
 Let $\{\alpha_n\}$, $n\in\Z$, be a bounded sequence of real numbers. To such a sequence
 we associate a function $F$ on the real line, defined by
 \begin{equation}
\label{eqR7.1}
F(x)=\sum_{n\in\mathbb Z} F_n(x), \quad x\in\R,
 \end{equation}
 where $F_n$ is the function $\1_{[n,n+\alpha_n]}$ if $\alpha_n \geq 0$,
or  $-\1_{[n+\alpha_n,n]}$ if $\alpha_n<0$.

Since the sequence $\{\alpha_n\}$ is bounded, the series
\eqref{eqR7.1} is easily seen to converge in the space
of temperate distributions to a bounded function $F$ on $\R$.
In particular, $F$ is a temperate distribution.

 \begin{thm}
\label{thmR7.2}
Given two numbers $b \in (0,\half)$ and $\eps >0$,
there is $\delta>0$
with the following property:
Let $S$ be a Schwartz distribution on $\R$ satisfying
 \begin{equation}
\label{eqR7.2.11}
S(-t) = \overline{S(t)}, \quad
\supp(S) \sbt (-b,b), \quad
\sup_{k \in \Z} |\ft{S}(k)| \leq \delta.
\end{equation}
Then there is a bounded, real sequence $\alpha = \{\alpha_n\}$, $n\in\Z$,
such that $\|\alpha\|_\infty \leq \eps$ and
 \begin{equation}
\text{$\hat F=S$ in $(-b,b)$,} 
\end{equation}
where $F$ is the function  defined by \eqref{eqR7.1}.
 \end{thm}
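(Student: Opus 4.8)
The plan is to set up the problem as a fixed-point / perturbation argument in which the unknown sequence $\alpha = \{\alpha_n\}$ is determined by the target distribution $S$ through a linear leading term plus a small nonlinear correction. First I would compute the Fourier transform $\ft{F}$ explicitly from the definition \eqref{eqR7.1}. Each block $F_n$ is (up to sign) the indicator of a short interval $[n, n+\alpha_n]$, whose Fourier transform is $\ft{F_n}(t) = e^{-2\pi i n t}\,(1 - e^{-2\pi i \alpha_n t})/(2\pi i t)$. Summing over $n$ and expanding $1 - e^{-2\pi i \alpha_n t} = 2\pi i \alpha_n t + O((\alpha_n t)^2)$, one sees that on a bounded frequency window the linearization of $\alpha \mapsto \ft{F}$ is essentially the map sending $\{\alpha_n\}$ to the trigonometric series $\sum_n \alpha_n e^{-2\pi i n t}$; that is, to leading order $\ft{F}(t) \approx \sum_n \alpha_n e^{-2\pi i n t}$ on the interval $(-b,b)$, with a nonlinear remainder that is quadratically small in $\|\alpha\|_\infty$.

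Next I would reformulate the requirement $\ft{F} = S$ on $(-b,b)$ as an equation for $\alpha$. Since $|I|<1$ for $I=(-b,b)$ and $b<\tfrac12$, I can work on the circle: the natural candidate is to take $\alpha_n = \ft{S}(n)$ (or the appropriate real analogue forced by the symmetry $S(-t)=\overline{S(t)}$, which guarantees $\alpha$ is real), so that the linear part of $\ft{F}$ matches the Fourier expansion of $S$ on the interval. Because $\|\ft{S}(k)\|_\infty \le \delta$ and $S$ is a distribution supported in $(-b,b)$ of controlled order, this choice gives $\|\alpha\|_\infty$ comparable to $\delta$, which I can force below $\eps$ by shrinking $\delta$. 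The equation to solve is then $\alpha = L^{-1}(S) + N(\alpha)$, where $L$ is the linearization above (invertible on the relevant band because distinct integer frequencies are orthogonal and the window is fixed) and $N$ collects the quadratic-and-higher error terms in the expansion of $\ft F$. I would solve this by a contraction mapping (Banach fixed point) argument in a small ball of the sequence space $\ell^\infty(\Z)$, or following \cite[Sections 2, 3]{KL16}, by the iterative scheme they use in place of the implicit function theorem.

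The main obstacle, and the step demanding the most care, is controlling the nonlinear remainder $N(\alpha)$ and the cross-talk between different frequencies when one matches the trigonometric series to $S$ on the sub-interval $(-b,b)$ rather than on the full circle. The equality $\ft F = S$ is only required on $(-b,b)$, so I must arrange that the difference $\ft F - S$, which a priori is some distribution, actually vanishes there; this means absorbing the tails of the series and the quadratic errors into the freedom still available, and verifying that the remainder map $N$ is a genuine contraction with a small Lipschitz constant on the ball $\{\|\alpha\|_\infty \le \eps\}$. The symmetry condition $S(-t)=\overline{S(t)}$ must be propagated through the construction to keep $\alpha$ real at every iterate, and the bound $\sup_k|\ft S(k)| \le \delta$ is precisely what makes both $\|\alpha\|_\infty$ small and the nonlinear terms negligible. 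Once the fixed point $\alpha$ is produced with $\|\alpha\|_\infty \le \eps$, the function $F$ defined by \eqref{eqR7.1} satisfies $\ft F = S$ on $(-b,b)$ by construction, completing the proof.
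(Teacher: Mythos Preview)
Your proposal is correct and follows essentially the same route as the paper: linearize $\alpha\mapsto \ft F$ on $(-b,b)$ as the trigonometric series $\sum_n \alpha_n e^{-2\pi i n t}$, isolate the quadratic-and-higher remainder, and solve by a contraction in $\ell^\infty$ (equivalently, in the pseudomeasure space on $\T$ via the isometry $T\mapsto\{\ft T(k)\}$). The one technical device your sketch leaves implicit, and which the paper uses to tame the ``cross-talk'' you flag, is a fixed smooth cutoff $\Phi$ supported in $(-l,l)\subset(-\tfrac12,\tfrac12)$ with $\Phi\equiv 1$ on $[-b,b]$: the remainder is defined as the $\Phi$-localized series, and the required Lipschitz/quadratic bounds then reduce to elementary Fourier-coefficient estimates for functions of the form $\varphi(st)\Phi(t)$ (Lemmas~\ref{lemR1.1}--\ref{lemR1.5}).
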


The proof of \thmref{thmR7.2} is given
below. It is divided into a series of lemmas.

\subsection{}
Given a number $b \in (0,\half)$ we choose $l = l(b)$ such that
 $ b<l< \half$. We also choose an infinitely smooth
 function $\Phi$ satisfying
the conditions
$\Phi(-t)=\overline{\Phi(t)}$ for all $t \in \R$,
$\Phi(t)=1$ for $t\in[-b,b]$, and
$\Phi(t)=0$ for $t \in \R \setminus (-l,l)$.

Let $I := [-\half, \half]$. Denote by
$\ft{\psi}(k)$ the $k$'th Fourier
coefficient of a function $\psi$ on $I$:
\begin{equation}
\label{eqR1.1.10}
\ft{\psi}(k) = \int_{I} \psi(t) e^{-2\pi i k t} dt, \quad k \in \Z.
\end{equation}

The following lemma is inspired by \cite[Lemma 2.2]{KV92}.

\begin{lem}
  \label{lemR1.1}
Let $\varphi \in C^\infty(\R)$,
 $\varphi (0)=0$, and denote 
$\psi_s(t) := \varphi(st) \Phi(t)$. Then
\begin{equation}
\label{eqR1.1.6}
| \ft{\psi}_s(k) | \leq \frac{C |s|}{1+|k|^m}
\end{equation}
for every $s \in [-1,1]$ and $k \in \Z$,
where $C = C(\Phi, \varphi, m)>0$ is a constant 
 which depends neither on $s$ nor on $k$.
\end{lem}

\begin{proof}
First suppose that $k=0$.  We have
$|\varphi(t)| \leq C|t|$ for $t \in I$, hence
\begin{equation}
\label{eqR1.1.1}
|\ft{\psi}_s(0)| \leq C
|s| \int_{I} |t \Phi(t)| dt = C |s|.
\end{equation}

Next we assume that $k \neq 0$. We
integrate by parts $m$ times and use the
fact that the function $\psi_s$ vanishes in
a neighborhood of the points $\pm \half$.
This yields
\begin{equation}
\label{eqR1.1.2}
\ft{\psi}_s(k) = \frac1{(2 \pi i k)^m}
\int_{I}  \psi_s^{(m)}(t)
 e^{-2\pi i k t} dt.
\end{equation}
By the product rule for the $m$'th derivative we have
\begin{equation}
\label{eqR1.1.3}
  \psi_s^{(m)}(t)
= \sum_{j=0}^{m} \binom{m}{j}
s^j \varphi^{(j)} (st) \Phi^{(m-j)}(t).
\end{equation}
Combining \eqref{eqR1.1.2} and \eqref{eqR1.1.3} yields
the estimate
\[
| \ft{\psi}_s(k)| \leq \frac1{(2 \pi |k|)^m}
 \sum_{j=0}^{m} \binom{m}{j}
|s|^j \int_I |\varphi^{(j)} (st) \Phi^{(m-j)}(t)| dt.
\]
Since the derivatives
$\varphi', \varphi'', \dots, \varphi^{(m)}$
are bounded on $I$, each one of the terms in the sum
corresponding to $j=1,2,\dots,m$ is bounded by
$C |s|$, while the term
corresponding to $j=0$ can be estimated using
$|\varphi(t)| \leq C|t|$, $t \in I$, which again
yields $C |s|$.
\end{proof}

\begin{lem}
  \label{lemR1.5}
Let $\varphi \in C^\infty(\R)$,
 $\varphi'(0)=0$, and denote 
\begin{equation}
\psi_{u,v}(t) := \frac{\varphi(vt) - \varphi(ut)}{t} \cdot \Phi(t).
\end{equation}
 Then
\begin{equation}
\label{eqR1.5.6}
| \ft{\psi}_{u,v}(k) | \leq  \max \{|u|,|v|\} \cdot \frac{C |v-u| }{1+|k|^m}
\end{equation}
for every $u,v \in [-1,1]$ and
$k \in \Z$,
where $C = C(\Phi, \varphi, m)>0$ is a constant 
 which does not depend on $u$, $v$ or $k$.
\end{lem}

\begin{proof}
We may suppose that $u<v$. We observe that
\begin{equation}
\label{eqR1.5.1}
\psi_{u,v}(t) = \int_u^v \varphi'(st) \Phi(t) ds
= \int_u^v \psi_s(t) ds,
\end{equation}
where we define $\psi_s(t) := \varphi'(st) \Phi(t)$. Hence
\begin{equation}
\label{eqR1.5.2}
\ft{\psi}_{u,v}(k) =  \int_u^v \ft{\psi}_s(k) ds,
\quad k \in \Z.
\end{equation}
By \lemref{lemR1.1} the estimate
\eqref{eqR1.1.6} is valid for every $s \in [u,v]$,
where $C = C(\Phi, \varphi, m)>0$ is a constant 
 which does not depend on $s$ or $k$. Hence
\begin{equation}
\label{eqR1.5.3}
|\ft{\psi}_{u,v}(k)| \leq  \frac{C}{1+|k|^m}
\int_u^v |s| ds
\end{equation}
from which \eqref{eqR1.5.6} follows.
\end{proof}

\subsection{}
If $T$ is a Schwartz distribution 
supported on $[-l,l]$, then $\ft{T}(k)$ denotes the
 $k$'th Fourier coefficients of $T$:
\[
\ft{T}(k) = \dotprod{T}{ e^{-2\pi i k t}},
\quad k \in \Z.
\]

\begin{lem}
  \label{lemR2.13}
Let $T$ be a distribution 
supported on $[-l,l]$. Then the series
\begin{equation}
\label{eqR2.13.2}
\sum_{n \in \Z} \ft{T}(n) e^{2 \pi i n t}
\end{equation}
converges unconditionally in the distributional sense
to $T$ in the open interval $(-\half, \half)$.
\end{lem}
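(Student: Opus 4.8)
The plan is to regard $T$ as a distribution on the circle $\T = \R/\Z$ and then combine the rapid decay of the Fourier coefficients of a test function with the duality formula \eqref{eqR9.1}. Since $\supp(T) \sbt [-l,l]$ with $l < \half$, the support of $T$ lies in the interior of the fundamental domain $(-\half,\half)$, so $T$ may be viewed as a Schwartz distribution on $\T$, and its Fourier coefficients on $\T$ coincide with the numbers $\ft{T}(n) = \dotprod{T}{e^{-2\pi i n t}}$ already defined.

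First I would record that these coefficients grow at most polynomially. Since $T$ is a compactly supported distribution it has some finite order $N$, and the standard seminorm estimate gives constants $C, N$ with $|\ft{T}(n)| = |\dotprod{T}{e^{-2\pi i n t}}| \le C(1+|n|)^N$ for all $n \in \Z$, because the relevant derivatives of $e^{-2\pi i n t}$ on a fixed compact neighborhood of $\supp(T)$ are bounded by $(2\pi|n|)^N$.

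Next, fix a test function $\phi \in C^\infty(\R)$ with $\supp(\phi) \sbt (-\half,\half)$. Because $\phi$ vanishes near the endpoints $\pm\half$, it extends to a function in $C^\infty(\T)$, so its Fourier coefficients $\ft{\phi}(k)$ decay faster than any power of $|k|$. Testing any finite partial sum of \eqref{eqR2.13.2} against $\phi$ produces the scalar expression $\sum_{n} \ft{T}(n)\,\ft{\phi}(-n)$; by the polynomial bound on $\ft{T}(n)$ and the rapid decay of $\ft{\phi}(-n)$ the series $\sum_n |\ft{T}(n)|\,|\ft{\phi}(-n)|$ converges, so $\sum_{n} \ft{T}(n)\,\ft{\phi}(-n)$ converges \emph{absolutely}, hence unconditionally, and by \eqref{eqR9.1} its sum equals $\dotprod{T}{\phi}$. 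Since unconditional convergence of a series of distributions is equivalent to unconditional convergence of the tested scalar series against every test function, and absolute convergence of a scalar series implies unconditional convergence, this shows that \eqref{eqR2.13.2} converges unconditionally to $T$ on $(-\half,\half)$.

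The only point requiring genuine care — and the main (mild) obstacle — is the identification of $T$ with a distribution on $\T$ together with the matching of Fourier coefficients: one must use that $\supp(T)$ is contained in the \emph{open} interval $(-\half,\half)$, so that a smooth cutoff equal to $1$ near $\supp(T)$ and supported in $(-\half,\half)$ allows any periodic test function to be replaced by one supported in the fundamental domain without altering the action of $T$. This is precisely why the conclusion is stated on the open interval rather than on all of $\T$; everything else reduces to the elementary passage from absolute convergence of the tested scalar series to unconditional distributional convergence.
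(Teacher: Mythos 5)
Your proof is correct and follows essentially the same route as the paper: the paper's own proof is a one-line reduction to the unconditional convergence of the Fourier series of $T$ viewed as a distribution on the circle $\T$, and your argument simply fills in that reduction (polynomial growth of $\ft{T}(n)$, rapid decay of $\ft{\phi}(-n)$ for test functions supported in $(-\half,\half)$, absolute convergence of the tested scalar series, and the duality formula \eqref{eqR9.1}). No gaps.
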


This follows from the unconditional convergence of the series
\eqref{eqR2.13.2} to $T$ considered
as a distribution on the circle $\T$.

\subsection{}
Let $X$ be the space of all bounded sequences of real numbers
$\alpha = \{\alpha_n\}$,  $n \in \Z$,
endowed with the norm
\[
\|\alpha\|_X := \sup_{n \in \Z} |\alpha_n|
\]
that makes $X$ into a real Banach space.

Let $Y$ be the space of distributions $T$
supported on $[-l,l]$ whose  Fourier coefficients
$\ft{T}(k)$, $k \in \Z$,
are real and bounded. If we endow $Y$ with the norm
\begin{equation}
\label{eqR2.7.12}
\|T\|_Y := \sup_{k \in \Z} |\ft{T}(k)|
\end{equation}
then also $Y$ is a real Banach space,
which can be viewed as a closed subspace
of $PM(\T)$.

We observe that a distribution $T$
supported on $[-l,l]$ has real Fourier coefficients
(that is, $\ft{T}(k) \in \R$ for every
 $k \in \Z$) if and only if $T(-t) = \overline{T(t)}$.

\begin{lem}
  \label{lemR2.7}
Let $\{T_n\}$, $n \in \Z$, be a sequence of elements
of $Y$. Assume that there is 
a sequence $\gam \in X$ such that
\begin{equation}
\label{eqR2.7.1}
|\ft{T}_n(k)| \leq \frac{|\gam_n|}{1+|k-n|^2}
\end{equation}
for every $n$ and $k$ in $\Z$.
Then the series  
\begin{equation}
\label{eqR2.7.2}
T=\sum_{n \in \Z} T_n
\end{equation}
converges unconditionally in the distributional sense
to an element $T \in Y$
satisfying 
\begin{equation}
\label{eqR2.7.21}
\|T\|_Y \leq K \|\gam\|_X
\end{equation}
where $K$ is an absolute constant.
\end{lem}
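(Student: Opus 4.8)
The plan is to reduce everything to Fourier coefficients, where the hypothesis \eqref{eqR2.7.1} is a convolution-type bound whose total mass is controlled by a single convergent series. The main observation is that, summing \eqref{eqR2.7.1} over $n$ for a fixed $k \in \Z$, one gets
\[
\sum_{n \in \Z} |\ft{T}_n(k)| \le \|\gam\|_X \sum_{n \in \Z} \frac{1}{1+|k-n|^2} = K \|\gam\|_X, \qquad K := \sum_{m \in \Z} \frac{1}{1+m^2},
\]
and $K$ is a finite absolute constant, independent of $k$. In particular, for each fixed $k$ the scalar series $\sum_n \ft{T}_n(k)$ converges absolutely; I would denote its sum by $c(k)$, which is real since each $\ft{T}_n(k)$ is real, and the displayed bound gives $\sup_k |c(k)| \le K \|\gam\|_X$.

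To establish unconditional convergence in the distributional sense I would test against an arbitrary $\phi \in A(\T)$. Using $\dotprod{T_n}{\phi} = \sum_k \ft{T}_n(k)\, \ft{\phi}(-k)$ and summing absolute values, Tonelli's theorem for double series together with the previous estimate yields
\[
\sum_{n \in \Z} |\dotprod{T_n}{\phi}| \le \sum_{k \in \Z} |\ft{\phi}(-k)| \sum_{n \in \Z} |\ft{T}_n(k)| \le K \|\gam\|_X \, \|\phi\|_{A(\T)}.
\]
Hence $\sum_n \dotprod{T_n}{\phi}$ converges absolutely, and therefore unconditionally, for every $\phi \in A(\T)$, and $\phi \mapsto \sum_n \dotprod{T_n}{\phi}$ is a bounded linear functional on $A(\T)$ of norm at most $K\|\gam\|_X$. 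By the duality $A(\T)^* = PM(\T)$ this functional is represented by a pseudomeasure $T$, the unconditional limit of the series, with $\|T\|_{PM(\T)} \le K\|\gam\|_X$. Interchanging the two summations also identifies $\ft{T}(k) = c(k)$, so $\ft{T}(k)$ is real for each $k$ and $\|T\|_Y = \|T\|_{PM(\T)} \le K\|\gam\|_X$, which is precisely \eqref{eqR2.7.21}.

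Finally I would check that $T$ belongs to $Y$, that is, $\supp(T) \sbt [-l,l]$. For any $\phi \in A(\T)$ vanishing in a neighborhood of $[-l,l]$ we have $\dotprod{T_n}{\phi} = 0$ for every $n$, since each $T_n$ is supported on $[-l,l]$; summing over $n$ gives $\dotprod{T}{\phi} = 0$, which forces $\supp(T) \sbt [-l,l]$. Together with the reality of the Fourier coefficients this places $T$ in the closed subspace $Y \sbt PM(\T)$. I do not anticipate a genuine obstacle here: the argument is driven entirely by the summability of $\sum_m (1+m^2)^{-1}$ and a single application of Tonelli. The only points requiring care are the interchange of the order of summation in the double series, legitimate precisely because of the absolute bound above, and the verification that the limiting functional is honestly supported on $[-l,l]$ with real coefficients, so that it lies in $Y$ rather than merely in $PM(\T)$.
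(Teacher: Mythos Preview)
Your proof is correct and follows essentially the same route as the paper: both arguments bound $\sum_n |\dotprod{T_n}{\phi}|$ by $K\|\gam\|_X\|\phi\|_{A(\T)}$ with $K=\sum_m(1+m^2)^{-1}$ and invoke the duality $A(\T)^*=PM(\T)$; you merely interchange the order of the two summations and spell out explicitly the verification that $T\in Y$ (support in $[-l,l]$ and reality of the Fourier coefficients), which the paper leaves implicit.
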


\begin{proof}
Indeed, the condition
\eqref{eqR2.7.1} implies that
for any $\phi \in A(\T)$  we have
\[
 |\dotprod{T_n}{\phi}|
=  \Big| \sum_{k \in \Z} \ft{T}_n(k) \ft{\phi}(-k) \Big|
\leq |\gam_n| \sum_{k \in \Z} \frac{|\ft{\phi}(-k)|}{1+|k-n|^2},
\]
and hence
\[
\sum_{n \in \Z} |\dotprod{T_n}{\phi}|
\leq K \|\gamma\|_X \|\phi\|_{A(\T)},
\quad
K := \sum_{n \in \Z} \frac{1}{1+|n|^2}.
\]
This shows that
the series   \eqref{eqR2.7.2}
converges unconditionally in the weak* topology
of the space $PM(\T)$ (the dual of $A(\T))$
to an element $T \in Y$ satisfying 
\eqref{eqR2.7.21}.
\end{proof}

\subsection{}
Let $\alpha = \{\alpha_n\}$, $n \in \Z$, be a  sequence  in $X$
such that $\|\alpha\|_X \leq 1$. Define 
\begin{equation}
\label{eqR2.3}
(R \alpha)(t):=\sum_{n\in\mathbb Z} e^{2\pi int}\cdot \frac{e^{2\pi i\alpha_n t}-1-2\pi i\alpha_n t}{2\pi it} \cdot \Phi(t).
\end{equation}
Let $T_n$ be the $n$'th term of the series
\eqref{eqR2.3}. We observe that $T_n \in Y$.
 If we apply \lemref{lemR1.1} to the function
$\varphi(t) :=  (e^{2\pi i t}-1 -  2 \pi i t)/(2\pi it)$
with $s = \alpha_n$ and $m=2$, then
it follows from the lemma that
 condition \eqref{eqR2.7.1} is satisfied
with $\gam_n := C \alpha_n^2$,
 where $C>0$ does not
 depend on $\alpha$, $k$ or $n$.
Hence by \lemref{lemR2.7} 
the series \eqref{eqR2.3} converges
in the distributional sense to an
element of  the space $Y$,
and we have
\begin{equation}
\label{eqR2.4.1}
\|R \alpha\|_Y \leq C \|\alpha\|^2_X,
\quad \alpha \in X, \; \|\alpha\|_X \leq 1,
\end{equation}
where the constant $C$ does not depend on $\alpha$.

We note that the mapping $R$ 
defined by \eqref{eqR2.3} is  \emph{nonlinear}.

\subsection{}
For each $r>0$ let
 $U_r$ denote the closed ball of radius $r$ around the origin
in $X$:
\begin{equation}
\label{eqR3.1.1}
U_r := \{\alpha \in X : \|\alpha\|_X \leq r\}.
\end{equation}

\begin{lem}
\label{lemR3.1}
Given any $\rho > 0$ there is $0<r<1$ such that
\begin{equation}
\label{eqR3.1.2}
\|R\beta - R\alpha\|_Y \leq \rho \|\beta-\alpha\|_X,
\quad \alpha,\beta \in U_r.
\end{equation}
In particular, if $r$ is small enough then $R$ is
a contractive (nonlinear) mapping on $U_r$.
\end{lem}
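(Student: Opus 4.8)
The plan is to estimate the difference $R\beta - R\alpha$ term by term and to recognize that each term falls exactly under the scope of \lemref{lemR1.5}. Writing out the $n$'th term of $R\beta - R\alpha$ from the series \eqref{eqR2.3}, the constant $-1$ in the two numerators cancels, so that after introducing the auxiliary function
\[
\varphi(s) := \frac{e^{2\pi i s} - 2\pi i s}{2\pi i},
\]
which is entire and satisfies $\varphi'(0) = e^{2\pi i \cdot 0} - 1 = 0$, one has
\[
\frac{e^{2\pi i\beta_n t}-1-2\pi i\beta_n t}{2\pi it}
- \frac{e^{2\pi i\alpha_n t}-1-2\pi i\alpha_n t}{2\pi it}
= \frac{\varphi(\beta_n t)-\varphi(\alpha_n t)}{t}.
\]
Consequently the $n$'th term of $R\beta - R\alpha$ is precisely $T_n := e^{2\pi int}\,\psi_{\alpha_n,\beta_n}$, where $\psi_{u,v}$ is the function from \lemref{lemR1.5} attached to this $\varphi$, evaluated at $u=\alpha_n$, $v=\beta_n$.

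Since multiplication by $e^{2\pi int}$ shifts the Fourier coefficients by $n$, we have $\ft{T}_n(k) = \ft{\psi}_{\alpha_n,\beta_n}(k-n)$, and for $\alpha,\beta \in U_r$ with $r<1$ the arguments $\alpha_n,\beta_n$ lie in $[-1,1]$. Thus \lemref{lemR1.5} with $m=2$ gives
\[
|\ft{T}_n(k)| \leq \max\{|\alpha_n|,|\beta_n|\}\cdot\frac{C\,|\beta_n-\alpha_n|}{1+|k-n|^2}.
\]
Setting $\gam_n := C\max\{|\alpha_n|,|\beta_n|\}\,|\beta_n-\alpha_n|$ puts this estimate into the exact form required by \lemref{lemR2.7}. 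Moreover $\max\{|\alpha_n|,|\beta_n|\} \leq r$ and $|\beta_n-\alpha_n| \leq \|\beta-\alpha\|_X$, so that $\|\gam\|_X \leq Cr\,\|\beta-\alpha\|_X$. Applying \lemref{lemR2.7} then yields the unconditional convergence of $\sum_n T_n = R\beta - R\alpha$ in $Y$ together with the bound
\[
\|R\beta - R\alpha\|_Y \leq K\|\gam\|_X \leq KCr\,\|\beta-\alpha\|_X.
\]

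Given $\rho > 0$ it now suffices to choose $0<r<1$ small enough that $KCr \leq \rho$, which gives \eqref{eqR3.1.2}; taking $\rho<1$ yields the contractivity asserted in the final clause. The crux of the argument is the appearance of the factor $\max\{|u|,|v|\}$ in the estimate of \lemref{lemR1.5}: this extra factor, bounded by $r$ on $U_r$, is exactly what converts the otherwise fixed Lipschitz constant $KC$ into $KCr$, which can be made arbitrarily small by shrinking $r$. The only points needing care are the cancellation of the constant term in the difference (so that the numerator of each term vanishes to the right order at $t=0$ and $\varphi$ is a legitimate choice) and the verification $\varphi'(0)=0$; both are immediate, so no genuine obstacle remains once the correct $\varphi$ is identified.
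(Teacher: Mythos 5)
Your proof is correct and follows essentially the same route as the paper: the same choice $\varphi(s) = (e^{2\pi i s} - 2\pi i s)/(2\pi i)$ after cancelling the constant terms, the same application of Lemma~\ref{lemR1.5} with $m=2$ to each term, and the same use of Lemma~\ref{lemR2.7} to sum the resulting estimates, yielding a Lipschitz constant proportional to $r$. The only (harmless) difference is that you keep the sharper bound $\gamma_n = C\max\{|\alpha_n|,|\beta_n|\}\,|\beta_n-\alpha_n|$ before relaxing it to $Cr\|\beta-\alpha\|_X$, where the paper writes the relaxed form directly.
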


\begin{proof}
Let $\alpha,\beta \in U_r$ $(0<r < 1)$. Then using
\eqref{eqR2.3}  we  have
\begin{equation}
\label{eqR3.2}
(R \beta - R \alpha)(t) =
\sum_{n\in\mathbb Z} e^{2\pi int}\cdot \frac{(e^{2\pi i \beta_n t}
-2\pi i \beta_n t)
- ( e^{2\pi i\alpha_n t}
-2\pi i \alpha_n t) }{2\pi it} \cdot \Phi(t).
\end{equation}
Let $T_n$ be the $n$'th element of the series
\eqref{eqR3.2}. 
 We apply \lemref{lemR1.5} to the function
$\varphi(t) :=  (e^{2\pi i t} -  2 \pi i t)/(2\pi i)$
 with $u = \alpha_n$, 
$v = \beta_n$ and $m=2$.
The lemma implies that
the condition \eqref{eqR2.7.1} is satisfied
with $\gam_n := C r \cdot (\beta_n - \alpha_n)$,
 where the constant $C$ does not
 depend on $r$, $\alpha$, $\beta$, $k$ or $n$.
It therefore follows from \lemref{lemR2.7} that we have
the estimate
$\|R\beta - R\alpha\|_Y \leq Cr  \|\beta-\alpha\|_X$
where $C$ is a constant not depending on $r$, $\alpha$ or $\beta$.
Hence it suffices to choose $r$ small enough so that  $C r \leq  \rho$.
\end{proof}

\subsection{}
For each element $T \in Y$ we denote by $\F(T)$ 
the sequence of Fourier coefficients  of $T$, 
namely, the  sequence 
$\{\ft{T}(k)\}$, $k \in \Z$.
This defines a linear mapping  $\F: Y \to X$ satisfying
$\|\F(T)\|_X = \|T\|_Y$.

\begin{lem}
\label{lemR4.2}
Given any $\eps>0$ there is 
$\delta>0$ with the following property: 
Let $S \in Y$, 
$\|S\|_Y \le  \delta$. Then one can find an element
$T \in Y$,  $\|T-S\|_Y \le \eps \|S\|_Y$,  which solves the
equation  $T + R(\F(T)) = S$.
\end{lem}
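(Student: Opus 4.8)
The plan is to recast the equation $T + R(\F(T)) = S$ as a fixed-point problem and solve it by the Banach contraction principle, exploiting the fact that all the hard analytic estimates have already been packaged into \lemref{lemR3.1}. Rearranging the equation as $T = S - R(\F(T))$, I would introduce the map
\[
G(T) := S - R(\F(T)), \qquad T \in Y,
\]
and seek a fixed point of $G$. Note that $G$ is well-defined on the closed ball $B_r := \{T \in Y : \|T\|_Y \le r\}$ for any $r < 1$, since $\|\F(T)\|_X = \|T\|_Y \le r < 1$ places $\F(T)$ in the domain $U_r$ of $R$; and $G(T) \in Y$ because $S \in Y$ and $R$ takes values in $Y$.

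Next I would fix the constants in the right order. Given $\eps > 0$, first choose $\rho \in (0,1)$ small enough that $\rho/(1-\rho) \le \eps$, that is, $\rho \le \eps/(1+\eps)$. For this $\rho$, \lemref{lemR3.1} furnishes some $0 < r < 1$ with
\[
\|R\beta - R\alpha\|_Y \le \rho \|\beta - \alpha\|_X, \qquad \alpha, \beta \in U_r.
\]
Since $\F$ is linear with $\|\F(T)\|_X = \|T\|_Y$, this immediately gives the contraction estimate on $B_r$: for $T_1, T_2 \in B_r$ we have $\F(T_1), \F(T_2) \in U_r$, hence
\[
\|G(T_1) - G(T_2)\|_Y = \|R(\F(T_1)) - R(\F(T_2))\|_Y \le \rho \|\F(T_1) - \F(T_2)\|_X = \rho \|T_1 - T_2\|_Y.
\]
Observing from the definition \eqref{eqR2.3} that $R(0) = 0$ (each summand has numerator $e^{2\pi i \alpha_n t} - 1 - 2\pi i \alpha_n t$, which vanishes when $\alpha_n = 0$), the same Lipschitz bound with $\alpha = \F(T)$, $\beta = 0$ yields $\|R(\F(T))\|_Y \le \rho \|T\|_Y$. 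I would then set $\delta := r(1 - \rho) > 0$, so that whenever $\|S\|_Y \le \delta$ and $T \in B_r$,
\[
\|G(T)\|_Y \le \|S\|_Y + \|R(\F(T))\|_Y \le \delta + \rho r = r(1-\rho) + \rho r = r,
\]
proving that $G$ maps the complete metric space $B_r$ into itself.

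By the contraction mapping theorem, $G$ then has a (unique) fixed point $T \in B_r$, which by construction solves $T + R(\F(T)) = S$. For the quantitative estimate, since $T - S = -R(\F(T))$ I would write
\[
\|T - S\|_Y = \|R(\F(T))\|_Y \le \rho \|T\|_Y \le \rho\big(\|S\|_Y + \|T - S\|_Y\big),
\]
whence $\|T - S\|_Y \le \tfrac{\rho}{1-\rho}\|S\|_Y \le \eps \|S\|_Y$ by the choice of $\rho$. This completes the argument. I do not expect a genuine obstacle here: the nonlinear analysis is entirely absorbed into \lemref{lemR3.1} and the isometry property of $\F$, so the only point requiring care is the bookkeeping of the three constants $\rho, r, \delta$ (in that dependency order) and the small remark that $R(0) = 0$, which is what lets me control $\|T - S\|_Y$ rather than merely $\|T\|_Y$.
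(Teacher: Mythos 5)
Your proof is correct, and it follows the same basic strategy as the paper's --- rewrite the equation as the fixed-point problem $T = S - R(\F(T))$ and apply the Banach contraction principle, with \lemref{lemR3.1} supplying the contraction estimate through the isometry $\|\F(T)\|_X = \|T\|_Y$ --- but the invariance step is handled differently. The paper runs the iteration on the ball $B = \{T : \|T-S\|_Y \le \eps\|S\|_Y\}$ centered at $S$ and proves $H(B) \subset B$ using the quadratic bound \eqref{eqR2.4.1}, $\|R\alpha\|_Y \le C\|\alpha\|_X^2$, choosing $\delta$ so that $C(1+\eps)^2 \delta \le \eps$; the error estimate $\|T-S\|_Y \le \eps\|S\|_Y$ is then automatic because the fixed point lies in $B$. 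You instead work on the origin-centered ball $B_r$, never invoke \eqref{eqR2.4.1}, and obtain both the self-mapping property and the final estimate from the Lipschitz bound of \lemref{lemR3.1} together with the observation $R(0)=0$ (whence $\|R(\F(T))\|_Y \le \rho\|T\|_Y$), extracting $\|T-S\|_Y \le \tfrac{\rho}{1-\rho}\|S\|_Y \le \eps\|S\|_Y$ a posteriori from the fixed-point identity. Your route is slightly more economical in its inputs, since the quadratic estimate on $R$ is not needed at all; the paper's route yields the stronger (though unneeded) conclusion that $\|T-S\|_Y = O(\|S\|_Y^2)$, i.e.\ the solution agrees with $S$ to second order in $\|S\|_Y$. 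Both arguments are complete; your ordering of the constants $\rho$, $r$, $\delta$ is the correct dependency order, and the remark that $R(0)=0$ is precisely what makes the origin-centered ball work.
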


 \begin{proof} 
Fix $S \in Y$ such that  $\|S\|_Y \le \delta$,
and let 
\[
B = B(S,\eps) :=\{ T \in Y : \|T-S\|_Y \le \eps \|S\|_Y\}.
\]
We observe that if $T \in B$ then  $\|T\|_Y \le (1+\eps) \|S\|_Y$.
Define a map $H: B \to Y$ by
\[
H(T) := S - R(\F(T)), \quad T \in B,
\]
and notice that an element $T \in B$ is a solution to the equation
$T + R(\F(T)) = S$ 
if and only if $T$ is a fixed point of the map $H$.
 
Let us show that if $\delta$ is small enough then $H(B)\subset B$.
 Indeed, if $T \in B$ then using \eqref{eqR2.4.1} we have
\[
\|H(T)-S\|_Y = \|R(\F(T))\|_Y 
\leq C \|\F(T)\|^2_X =
 C \|T\|^2_Y \leq C (1+\eps)^2 \|S\|^2_Y.
\]
Hence if we choose $\delta$ such that 
$C(1+\eps)^2 \delta \leq \eps$ then
we obtain
\[
\|H(T)-S\|_Y \leq \eps \|S\|_Y,
\]
and it follows that $H(B)\subset B$.
 
It also follows  from \lemref{lemR3.1} 
that if $\delta$ is small enough, then $H$ is a contractive 
mapping from the closed set $B$ into itself.
Indeed, let $T_1, T_2 \in B$, then we have
\[
\|H(T_2) - H(T_1) \|_Y = 
\|R(\F(T_2)) - R(\F(T_1)) \|_Y \le
\rho \|\F(T_2) - \F(T_1) \|_X
= \rho \|T_2 - T_1 \|_Y,
\]
where $0<\rho<1$.
Then the Banach fixed point theorem implies that
$H$ has a (unique) fixed point $T \in B$,
which yields the desired solution.
 \end{proof}

\subsection{}
\emph{Proof of \thmref{thmR7.2}}.
Let $S$ be a Schwartz distribution satisfying
\eqref{eqR7.2.11}. Then $S \in Y$
and $\|S\|_Y \le  \delta$. Define
$S_1(t) := S(-t)$, then also $S_1$ is
a distribution in $Y$ and we have $\|S_1\|_Y = \|S\|_Y$.
By \lemref{lemR4.2}, if $\delta$ is small 
enough then there is an element
$T \in Y$,  $\|T-S_1\|_Y \le \eps \|S_1\|_Y$,  which solves the
equation  $T + R(\F(T)) = S_1$.

Let $\alpha \in X$ be the sequence defined by
 $\alpha_n=\ft{T}(n)$, $n\in\Z$, then
$\|\alpha\|_X \leq \eps$ provided that
 $\delta$ is small enough. Let
 $F$ be the function given by \eqref{eqR7.1} that is
associated to this sequence $\alpha=\{\alpha_n\}$. We have
\[
\hat F (-t) = \lim_{N\to\infty}\sum_{|n|\le N}\hat F_n(-t)
\]
 in the sense of distributions, and
\[
\hat F_n(-t) = e^{2\pi int} \cdot \frac{e^{2\pi i\alpha_n t}-1}{2\pi it}.
\]
Hence
\begin{align*}
\hat F(-t) =
\lim_{N\to\infty} \Big[ \sum_{|n|\le N} \alpha_n e^{2\pi int} 
 + \sum_{|n|\le N} e^{2\pi int}  \cdot
\frac{e^{2\pi i\alpha_n t}-1-2\pi i\alpha_n t}{2\pi it} \, \Big].
\end{align*}
The first sum converges to $T$ in $(-b,b)$ according to
  \lemref{lemR2.13}; while the second sum  converges 
to $R\alpha$ in $(-b,b)$, which is due to \eqref{eqR2.3} and
the fact that
$\Phi(t)=1$ on $(-b,b)$. We conclude that
\[
 \hat F(-t)= (T + R\alpha)(t) = S_1(t)
\quad \text{in $(-b,b)$.}
\]
This means that $\ft{F} = S$ in $(-b,b)$ and thus
\thmref{thmR7.2} is proved.
\qed

\subsection{}
The theorem just proved will now
be used to deduce the following one:

 \begin{thm}
\label{thmR7.5}
Given two numbers $a,b$ such that $0 <a<b < \half$,
and given $\eps>0$,  there is $\delta >0$ 
with the following property: 
Let $S$ be a distribution on $\R$ satisfying
 \begin{equation}
\label{eqR7.5.11}
S(-t) = \overline{S(t)}, \quad
\supp(S) \sbt  (-b,-a) \cup (a,b), \quad
\sup_{k \in \Z} |\ft{S}(k)| \leq \delta.
\end{equation}
Then there is a 
 real sequence $\Lam = \{\lam_n\}$, $n\in\Z$,
such that 
$|\lam_n - n | \leq \eps$ for all $n$, and
 \begin{equation}
\label{eqR7.5.12}
\text{$\ft{\delta}_\Lam =  \delta_0 + S$
 in $(-b,b)$.}
 \end{equation}
 \end{thm}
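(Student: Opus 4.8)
The plan is to reduce \thmref{thmR7.5} to \thmref{thmR7.2} by differentiating the function $F$ of \eqref{eqR7.1}. First I would record the elementary identity that links $\delta_\Lam$ to $F$: if one sets $\lam_n := n + \alpha_n$, then for every $n$ the distributional derivative of $F_n$ equals $\delta_n - \delta_{\lam_n}$, as is immediate from the definition of $F_n$ and holds for both signs of $\alpha_n$. Summing over $n$ and using that the series \eqref{eqR7.1} converges in the space of tempered distributions (so that termwise differentiation is legitimate) gives $F' = \delta_\Z - \delta_\Lam$, hence $\delta_\Lam = \delta_\Z - F'$.

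Taking Fourier transforms and using $\ft{F'}(t) = 2\pi i t\,\ft{F}(t)$ together with the Poisson summation formula $\ft{\delta}_\Z = \delta_\Z$ (which restricts to $\delta_0$ on $(-b,b)$ since $b < \half$), I obtain
\[
\ft{\delta}_\Lam = \delta_0 - 2\pi i t\,\ft{F} \quad\text{in } (-b,b).
\]
Comparing with the target \eqref{eqR7.5.12}, the goal becomes to produce an $F$ with $\ft{F} = S'$ in $(-b,b)$, where $S' := -S/(2\pi i t)$. This is exactly the kind of conclusion \thmref{thmR7.2} provides, so the remaining task is to check that $S'$ is an admissible input for that theorem.

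The key point — and the reason the hypothesis $\supp(S) \sbt (-b,-a)\cup(a,b)$ (bounded away from the origin) is imposed — is precisely that $S'$ then makes sense. I would fix once and for all a smooth function $\eta$, depending only on $a,b$, that is odd and purely imaginary (so that $\eta(-t) = \overline{\eta(t)}$) and satisfies $\eta(t) = -1/(2\pi i t)$ on $\{a \le |t| \le b\}$, and set $S' := \eta S$. Then $S'$ is a compactly supported distribution with $\supp(S') \sbt \supp(S) \sbt (-b,b)$, it agrees with $-S/(2\pi i t)$ on $\supp(S)$, and the symmetry relation $\eta(-t)=\overline{\eta(t)}$ combined with $S(-t)=\overline{S(t)}$ yields $S'(-t)=\overline{S'(t)}$. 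The only condition left is smallness of the Fourier coefficients: using the boundedness of multiplication by $\eta \in A(\T)$ on $PM(\T)$ recorded in the preliminaries, one gets $\|S'\|_Y \le \|\eta\|_{A(\T)}\|S\|_Y$, so $\sup_k|\ft{S'}(k)| \le \|\eta\|_{A(\T)}\,\delta$.

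Finally I would apply \thmref{thmR7.2} with the given $b$ and $\eps$; it furnishes some $\delta_1 > 0$, and I set $\delta := \delta_1/\|\eta\|_{A(\T)}$. Then $\|S'\|_Y \le \delta_1$, so the theorem yields a real sequence $\alpha$ with $\|\alpha\|_\infty \le \eps$ and $\ft{F} = S'$ in $(-b,b)$. Putting $\lam_n := n + \alpha_n$ gives $|\lam_n - n| \le \eps$, and the computation above delivers $\ft{\delta}_\Lam = \delta_0 - 2\pi i t\,S' = \delta_0 + S$ in $(-b,b)$, since $-2\pi i t\,\eta(t) = 1$ on $\supp(S)$. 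The main obstacle is the middle step: justifying the passage from $\ft{F}$ to $\ft{\delta}_\Lam$ through the derivative relation (including convergence of the differentiated series and the invocation of Poisson summation), and then verifying that the smooth division by $t$ preserves the support, the symmetry, and the $Y$-norm smallness needed to feed $S'$ back into \thmref{thmR7.2}.
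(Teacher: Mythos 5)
Your proposal is correct and follows essentially the same route as the paper: the paper likewise multiplies $S$ by a fixed smooth function $\Psi$ (your $\eta$) equal to $-1/(2\pi i t)$ on $(-b,-a)\cup(a,b)$, feeds $S\Psi$ into \thmref{thmR7.2}, and recovers $\ft{\delta}_\Lam$ from $F' = \delta_\Z - \delta_\Lam$ together with Poisson summation. All the verifications you flag (support, symmetry, the $PM$-norm bound via $\|\Psi\|_{A(\T)}$, and the termwise differentiation) are exactly the ones carried out there.
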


\begin{proof}
We choose  an infinitely smooth function  $\Psi$ 
such that $\Psi(-t)=\overline{\Psi(t)}$ for all $t \in \R$,
$\Psi(t) = -1/(2\pi i t)$ in $(-b,-a)\cup(a,b)$,
and $\Psi(t)=0$ for $t \in \R \setminus (-l,l)$.

Let $S$ be a distribution satisfying
\eqref{eqR7.5.11}, then $S \in Y$ and 
  $\|S\|_Y \leq \delta$. Define a new
distribution
$S_1 := S \cdot \Psi$, then also  $S_1 \in Y$.
We have
\[
\|S_1\|_Y \leq M \|S\|_Y, \quad 
M := \|\Psi\|_{A(\T)}.
\]
By \thmref{thmR7.2},  if $\delta$ is small 
enough then there is a sequence $\alpha \in X$,
$\|\alpha\|_X \leq \eps$,
such that the function $F$ defined by \eqref{eqR7.1}
satisfies $\hat F= S_1$ in $(-b,b)$.
It follows that the distributional derivative $F'$
 of the function $F$ satisfies
 \begin{equation}
\label{eqR7.5.15}
\ft{F'}(t) = 2 \pi i t \ft{F}(t) =  2 \pi i t S_1(t) = 
 2 \pi i t \Psi(t) S(t) = - S(t)
\quad \text{in $(-b,b)$},
 \end{equation}
which is true since $2 \pi i t \Psi(t) = -1$ in a neighborhood
of $\supp(S)$.

Let 
$\Lam = \{\lam_n\}$, $n\in\Z$,
be defined by
$\lam_n:=n+\alpha_n$.
Then we have
$|\lam_n - n | \leq \eps$ for all $n$.
It follows from the definition  \eqref{eqR7.1}
of  $F$ that
\[
F' = \sum_{n\in\Z}
(\delta_n-\delta_{\lam_n}) = \delta_{\Z} - \delta_\Lambda,
\]
that is, $\delta_\Lam = \delta_{\Z} - F'$. 
By Poisson's summation formula
$\hat \delta_{\Z}=\delta_{\Z}$, hence we have
\[
\hat \delta_\Lambda =\delta_{\Z}-\ft{{F'}}
= \delta_0 + S
\quad \text{in $(-b,b)$}
\]
due to \eqref{eqR7.5.15}. The proof of \thmref{thmR7.5} is thus concluded.
\end{proof}

\subsection{}
Finally, we observe that
 \thmref{thmR7.15} follows
from \thmref{thmR7.5}.
Indeed, if $S$ is a pseudomeasure on $\R$ such that
$\supp(S) \sbt (a,b)$, then
the distribution $r(S + \widetilde{S})$
satisfies the conditions
\eqref{eqR7.5.11} if 
$r>0$ is sufficiently small.
Hence \thmref{thmR7.5} yields
a sequence $\Lam = \{\lam_n\}$
with the properties as in the statement
of  \thmref{thmR7.15}.

% =======================================

\section{Addendum: A problem of Kolountzakis}
\label{secAD1}

The following question was posed to us
by Kolountzakis: Does there exist a real sequence 
$\Lam = \{\lam_n\}$, $n \in \Z$,
satisfying 
 \begin{equation}
\label{eqR12.1}
A \le \lambda_{n+1} - \lam_n \le B, \quad
n \in \Z,
 \end{equation}
where $A,B>0$ are constants, 
such that $f + \Lam$ is a tiling for some  nonzero 
$f \in L^1(\R)$, but there is no \emph{nonnegative}
$f$ with this property?

The answer turns out to depend on the level 
of the tiling. Suppose first that there is a tiling $f + \Lam$
at some \emph{nonzero} level $w$.
 Then $f$ must have nonzero integral, see
\cite[Lemma 2.3(i)]{KL96}.  In turn this implies 
\cite[Section 4]{KL16} that $\ft{\delta}_\Lam = c \cdot \delta_0$
in some neighborhood $(-\eta, \eta)$
of the origin, where $c$ is a nonzero, positive scalar.
It follows that $f + \Lam$ is a tiling whenever $f$
is a Schwartz function with
$\supp(\ft{f}) \sbt (-\eta, \eta)$.
In particular, there exist tilings
$f + \Lam$ at level one with $f$  nonnegative.

To the contrary, we will construct an example showing
 that the same is not true  if $\Lam$ is only assumed to admit a 
tiling at level zero. We will prove the following
result:

 \begin{thm}
\label{thmR12.2}
There is 
 a real sequence $\Lam = \{\lam_n\}$, $n\in\Z$,
satisfying \eqref{eqR12.1} 
for which
there exist tilings $f + \Lam$
with nonzero
$f \in L^1(\R)$, but  any such a tiling
is necessarily a tiling at  level  zero.
In particular $\Lam$ cannot tile
with any nonnegative (nonzero) $f$.
 \end{thm}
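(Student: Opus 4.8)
The plan is to build $\Lam$ by the same implicit-function machinery used for \thmref{thmR7.15}, but this time starting from a pseudomeasure $S$ whose relevant feature is a \emph{sign obstruction} rather than a failure of spectral synthesis. First I would use \thmref{thmR7.5} to engineer a set $\Lam = \{n + \alpha_n\}$, a small perturbation of $\Z$ with $\alpha \in \ell^\infty$ (hence automatically satisfying the gap condition \eqref{eqR12.1} with constants $A, B$ close to $1$, once $\|\alpha\|_\infty$ is small), such that $\ft{\delta}_\Lam = \delta_0 + S$ in $(-b,b)$ for a prescribed distribution $S$ supported in $(-b,-a)\cup(a,b)$ with $S(-t)=\overline{S(t)}$ and small Fourier coefficients. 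As in \lemref{lemR14.3}, for any $h \in L^1(\R)$ with $\supp(\ft h)\sbt(-b,b)$ the Fourier transform of $h \ast \delta_\Lam$ is $(\delta_0 + S)\cdot \ft h$. Thus $f + \Lam$ is a tiling at level $w$ precisely when this pseudomeasure equals $w\,\delta_0$, i.e.\ when $S \cdot \ft f = 0$ and $\ft f(0) = w$.

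The core of the argument is to choose $S$ so that the space of admissible $\ft f$ is forced to vanish at $0$. I would take $S$ to be the distribution obtained from Malliavin's pair by the same symmetrization $S = S_0 + \widetilde{S_0}$ used in \eqref{eqR14.1}, but now I would exploit a positivity/linear-functional constraint: the condition $S \cdot \ft f = 0$ together with the requirement $\ft f \in A(\R)$, $\supp(\ft f)\sbt(-b,b)$, restricts $\ft f$ on $\supp(S)$, and I want to arrange that any such $\ft f$ automatically has $\ft f(0) = 0$. Concretely, I expect to choose $S$ so that there is a function $\phi \in A(\R)$ vanishing on $\supp(S)$ with $\dotprod{S}{\phi} \neq 0$ (Malliavin), and a second, smooth $\psi$ with the same zero set; the tilings then correspond to functions built from $\psi$-type data, and the nonspectral-synthesis gap between $\phi$ and $\psi$ is what prevents the existence of a \emph{nonnegative} tile while still permitting nonzero (signed) ones. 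The key point to extract is that the only surviving freedom forces the tiling level $w = \ft f(0)$ to be $0$.

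To get the final ``level zero'' conclusion I would argue as follows. Existence of a nonzero tiling: pick $\psi$ smooth with $\supp(\psi)\sbt(a,b)$ vanishing on $\supp(S)$, so $S\psi = 0$; setting $\ft f = \psi + \overline{\psi(-\cdot)}$ gives a real $f \in L^1(\R)$ with $S\cdot\ft f = 0$ and $\ft f(0) = 0$, hence $f + \Lam$ tiles at level $0$, and one checks $f \not\equiv 0$. Necessity of level zero: suppose $f + \Lam$ is any tiling, so $S\cdot \ft f = 0$ and we must show $\ft f(0)=0$. Here I would design $\supp(S) = (-b,-a)\cup(a,b)$ to reach ``up to the origin'' in the sense that $0 \in \closure(\supp(S))$ is \emph{not} the mechanism; instead the vanishing $S \cdot \ft f = 0$ combined with continuity of $\ft f \in A(\R)$ and a carefully chosen $S$ (mimicking Malliavin on \emph{both} components $(a,b)$ and $(-b,-a)$) forces $\ft f$ to vanish on a set accumulating at $0$, whence $\ft f(0)=0$ by continuity. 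The nonnegativity corollary is then immediate: a nonnegative nonzero $f \in L^1(\R)$ has $\ft f(0) = \int f > 0$, contradicting level zero.

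The main obstacle I anticipate is the necessity direction — proving that \emph{every} admissible $\ft f$ (not just the constructed one) must satisfy $\ft f(0) = 0$. This requires a genuinely clever choice of $S$: it is not enough that $S$ kills some particular $\psi$; I need the annihilator $\{\ft f \in A(\R) : S \cdot \ft f = 0,\ \supp(\ft f)\sbt(-b,b)\}$ to lie entirely in the hyperplane $\{\ft f(0)=0\}$. I expect to realize this by transplanting Malliavin's construction so that $\delta_0$ (equivalently, the constant function $\1$ on the relevant interval, or the evaluation $\ft f \mapsto \ft f(0)$) behaves like the non-synthesizable functional: the functional $\ft f \mapsto \ft f(0)$ should be expressible as $\dotprod{S}{\,\cdot\,}$ up to elements that $A(\R)$-approximate functions vanishing near $\supp(S)$, so that $S\cdot\ft f = 0$ propagates to $\ft f(0)=0$. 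Verifying this propagation rigorously — essentially a local spectral-synthesis statement at the single point $0$ governed by the global failure of synthesis for $S$ — is the delicate heart of the proof, and is where I would invest the most care.
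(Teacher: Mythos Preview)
Your approach has a genuine gap in the necessity direction, and in fact the route via \thmref{thmR7.5} cannot succeed. If $\supp(S)\sbt(-b,-a)\cup(a,b)$ and $\ft{\delta}_\Lam=\delta_0+S$ in $(-b,b)$, then for \emph{any} Schwartz $f$ with $\supp(\ft f)\sbt(-a,a)$ and $\ft f(0)\neq 0$ one has $S\cdot\ft f=0$ (disjoint supports), whence by \lemref{lemR14.3} the Fourier transform of $f\ast\delta_\Lam$ is $\ft f(0)\,\delta_0$ and $f+\Lam$ tiles at the nonzero level $\ft f(0)$. So your $\Lam$ \emph{always} admits nonzero-level tilings, regardless of how cleverly $S$ is chosen inside $(-b,-a)\cup(a,b)$. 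The Malliavin mechanism is simply irrelevant here: since $\supp(S)$ is bounded away from $0$, the condition $S\cdot\ft f=0$ carries no information whatsoever about $\ft f(0)$, and there is no ``propagation'' to exploit. A second issue is that the necessity statement must cover \emph{every} $f\in L^1(\R)$, not only those with $\supp(\ft f)\sbt(-b,b)$; your outline never leaves that band-limited class.

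The paper's proof avoids both problems by a much simpler device. It uses \thmref{thmR7.2} (not \thmref{thmR7.5}) with $S$ a smooth even bump $r\psi$ supported \emph{on} $(-a,a)$, so that $\ft F=r\psi$ in $(-b,b)$ and hence $\ft{\delta}_\Lam=\delta_0-2\pi i r\,t\,\psi(t)$ there. Then $\ft{\delta}_\Lam$ vanishes on $(-b,-a)\cup(a,b)$, which produces nonzero level-zero tilings (take any Schwartz $f$ with $\supp(\ft f)$ in that set). For necessity the paper invokes the general fact from \cite[Section~4]{KL16}: if some $f+\Lam$ tiles at a nonzero level then $\int f\neq 0$, and this forces $\ft{\delta}_\Lam=c\,\delta_0$ in a neighborhood of the origin. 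But $\ft{\delta}_\Lam$ equals $\delta_0$ plus the nonzero smooth function $-2\pi i r\,t\,\psi(t)$ near $0$, a contradiction. The crucial idea you are missing is to place the obstruction \emph{at the origin} rather than away from it, and to use the cited structural result about $\ft{\delta}_\Lam$ near $0$ instead of any synthesis argument.
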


\begin{proof}
Let $a,b$ be two numbers  such that $0 <a<b < \half$.
Let $\psi$ be a smooth even function,
$\psi(t)>0$ on $(-a,a)$, and
$\psi(t)=0$ outside $(-a,a)$.
By \thmref{thmR7.2},
given any $\eps>0$  there is a 
real sequence $\alpha = \{\alpha_n\}$, $n\in\Z$,
satisfying $|\alpha_n| \leq \eps$ for all $n$,
and such that 
$\hat F(t)= r \psi(t)$ in $(-b,b)$ for some $r>0$,
where $F$ is the function  defined by \eqref{eqR7.1}.

Let the sequence
$\Lam = \{\lam_n\}$, $n\in\Z$,
be defined by
$\lam_n:=n+\alpha_n$. Then
 \begin{equation}
\label{eqR12.4}
\ft{\delta}_\Lambda =
 \delta_0 - 2 \pi i  r t \psi(t)
\quad \text{in $(-b,b)$.}
 \end{equation}
This can be shown in the same way as done in
 the proof of \thmref{thmR7.5} above.

In particular we have $\ft{\delta}_\Lambda = 0$
in the open set $G := (-b,-a) \cup (a,b)$, so there exist  nonzero real-valued Schwartz functions $f$ such that $f+\Lam$ is a tiling (at level zero). It suffices to choose $f$ such that $\supp(\ft{f})$ is contained in $G$.

On the other hand, suppose that there is $f \in L^1(\R)$
 such that $f+\Lam$  is a tiling at some \emph{nonzero} level $w$.
Then, as before, this implies that
 $\ft{\delta}_\Lam = c \cdot \delta_0$
in some interval $(-\eta, \eta)$, 
where $c$ is a nonzero (positive) scalar. 
This contradicts \eqref{eqR12.4}, hence no such $f$ exists.
In particular, if $f$ is nonnegative and $f+\Lam$
 is a tiling, then the tiling level must be zero and
 $f$ vanishes a.e.
\end{proof}

% =======================================

\section{Remarks} \label{secR9}

\subsection{}
Let $\Lambda\sbt\R$
 be a discrete set of bounded density. 
If the temperate distribution
$\hat\delta_\Lambda$ is a \emph{measure}
on $\R$, then 
condition \eqref{eqI3.1} is not
only necessary, but also sufficient, for
a function $f \in L^1(\R)$ to tile 
at some level $w$
with the translation set $\Lam$.
In this case the tiling 
level is given by $w = c(\Lam) \ft{f}(0)$,
where $c(\Lam)$ is
the mass that the measure $\ft{\delta}_\Lam$
assigns to the origin
(see \cite[Theorem 2.2]{KL21}).

For example, if $\Lambda$ is a \emph{periodic set}
then $\hat\delta_\Lambda$ is a (pure point) measure,
and $f + \Lam$ is a tiling if and only if
\eqref{eqI3.1} holds. It follows that the set $\Lam$
in \thmref{thmI8.1} is not periodic, nor can it
be represented as a finite union of periodic  sets.

\subsection{}
If $f$ has fast decay, e.g.\ $|f(x)| = o(|x|^{-N})$
as $|x| \to +\infty$ for every $N$, then
$\ft{f}$ is a smooth function and again 
the condition \eqref{eqI3.1} is both
necessary and sufficient for
$f  + \Lam$ to be a tiling at some level $w$.
It follows that the function $f$
in \corref{corI8.2} cannot be chosen to have fast decay.

\subsection{}
\thmref{thmI8.1} also holds in $\R^d$ for every $d\geq 1$.
This can be easily deduced from the one-dimensional result
by taking cartesian products. For example, in $\R^2$
one may take
$F(x,y)=f(x)h(y)$, $G(x,y)=g(x)h(y)$, where
$f$, $g$ are the functions from
\thmref{thmI8.1} and where $h \in L^1(\R)$ is such that
$h + \Z$ is a tiling at level one. 
Then $\ft{F}$, $\ft{G}$ have the 
same set of zeros, but $F$ tiles with 
the translation set $\Lam \times \Z$
while $g$ does not.

\subsection*{Acknowledgement}
The author is grateful to Mihalis Kolountzakis
for motivating discussions and for
posing the problem discussed in
\secref{secAD1}, as well as the problem 
 about positivity
of the functions in \thmref{thmI8.1}.

% =======================================

\end{document}